\documentclass[12pt]{article}
\textheight 220 true mm
\textwidth 160 true mm
\topmargin -5mm
\oddsidemargin 0mm
\evensidemargin 0mm

\setcounter{totalnumber}{50}
\setcounter{topnumber}{50}
\setcounter{bottomnumber}{50}

\usepackage[dvips]{graphicx}
\usepackage{amsmath,amsfonts,amssymb,amsthm,color}

\def\hsymb#1{\mbox{\strut\rlap{\smash{\huge $#1$}}\quad}}
\usepackage{natbib}

\newtheorem{theorem}{Theorem}
\newtheorem{lemma}{Lemma}
\newtheorem{corollary}{Corollary}

\newtheorem{proposition}{Proposition}

\newtheorem{definition}{Definition}

\newtheorem{conjecture}{Conjecture}

\usepackage{bm}
\bmdefine{\Bt}{t}
\bmdefine{\BX}{X}
\bmdefine{\BY}{Y}
\bmdefine{\BZ}{Z}
\bmdefine{\BB}{B}
\bmdefine{\BM}{M}
\bmdefine{\BD}{D}
\bmdefine{\Bi}{i}
\bmdefine{\Bj}{j}
\bmdefine{\Bk}{k}
\bmdefine{\Bx}{x}
\bmdefine{\By}{y}
\bmdefine{\Bz}{z}
\bmdefine{\Bv}{v}
\bmdefine{\Bw}{w}
\bmdefine{\Bn}{n}
\bmdefine{\Ba}{a}
\bmdefine{\Bb}{b}
\bmdefine{\Bc}{c}
\bmdefine{\Be}{e}
\bmdefine{\Bu}{u}
\bmdefine{\Bp}{p}
\bmdefine{\Bzero}{0}
\bmdefine{\Bone}{1}

\title{On connectivity of fibers with  positive marginals in multiple
  logistic regression}
\author{
Hisayuki Hara\footnote{
Department of Technology Management for Innovation, 
University of Tokyo}, 
Akimichi Takemura\footnote{
Graduate School of Information Science and Technology, 
University of Tokyo}
\footnote{CREST, JST} \ and 
Ruriko Yoshida\footnote{
Department of Statistics, 
University of Kentucky}
}
\date{October 2008}

\begin{document}
\maketitle

\begin{abstract}
In this paper we consider exact tests of a multiple logistic
regression, where the levels of covariates are equally spaced, via Markov 
beses. In
usual application of multiple logistic regression, the sample size is
positive for each combination of levels of the covariates.
In this case we do not need a whole Markov basis, which
guarantees connectivity of all fibers.
We first give an explicit Markov basis for
multiple Poisson regression.  
By the Lawrence lifting of this basis, in the case of bivariate
logistic regression, we show a simple subset 
of the Markov basis which connects all 
fibers with a positive sample size for each combination of
levels of covariates.
\end{abstract}

\noindent
Keywords : contingency tables, exact test, Lawrence lifting, Markov
bases, MCMC, Segre product 

\section{Introduction}

\cite{diaconis-sturmfels} developed an algorithm for sampling from
conditional distributions for a statistical model of discrete exponential
families, based on the algebraic theory of toric ideals.
This algorithm is applied to categorical data analysis through the notion of
Markov bases. 
However, often Markov bases are large and difficult to
compute.
One reason for their large size is that they
guarantee connectivity of all fibers (conditional sample spaces).  
With a given particular data set, on the other hand, 
we are
naturally interested in the connectivity of a particular fiber.
However obtaining a subset of a Markov basis for connecting a particular
fiber is also a difficult problem in general \citep{Ian2008}.  This problem was already
discussed in Section 3 of \cite{diaconis-sturmfels} concerning
``corner minors''.
In \cite{aoki-takemura-2005jscs} the case of two-way incomplete tables
was studied.

In most applications of the logistic regression model, for each
combination of covariates, the number of successes and the number of
failures are observed.  The number of trials (i.e.\ the sum of numbers of
successes and failures) for each combination of covariates is usually
fixed by a sampling scheme and positive.  We call this marginal {\em the
response variable marginal}.  Therefore we are usually interested in the
connectivity of fibers with positive response variable marginals
rather than all the fibers.  
First, in this paper, we show an explicit form of a 
Markov basis for multiple Poisson regression.  Then, extending the
result of \cite{chen-dinwoodie-dobra-huber2005}, 
we show an explicit form of a subset of Markov basis, which guarantees the
connectivity of every fiber with positive response variable marginals
for bivariate logistic regression.  We conjecture that
a similar subset of Markov basis connects fibers with positive response variable
marginals for a general multiple logistic regression. However, it seems difficult to prove this conjecture.  

The logistic regression can
be understood as the Lawrence lifting  of a Poisson regression.  
Let $A$ denote a configuration defining a toric ideal and let
$\Lambda(A)$ denote its Lawrence lifting.  Let $I_A$ and
$I_{\Lambda(A)}$ denote the respective toric ideals. It is known
\citep[Theorem 7.1]{sturmfels1996} that the unique minimal Markov
basis of $I_{\Lambda(A)}$ coincides with the Graver basis of $I_A$.
Therefore the whole Graver basis of $I_A$ is needed to guarantee the
connectivity of all fibers of $\Lambda(A)$.  However many of the
elements of the Graver basis of $I_A$ seem to be needed to cope with
the case of zero response variable marginal frequencies.
In Section \ref{sec:connectivity}, for the case of bivariate logistic
regression, we prove that a smaller Markov basis for the Poisson regression 
extended to the logistic regression guarantees the connectivity of
fibers with positive response variable marginals. 

This paper is organized as follows. In Section \ref{sec:preliminaries}
we summarize results on Markov basis of univariate Poisson regression
and results on the connectivity for fibers with positive response variable
marginals of univariate logistic  regression.
In Section \ref{sec:Segre} we prove a theorem on
Markov bases of Segre product of configurations and apply it to
multiple Poisson regression.  In Section \ref{sec:connectivity} we
prove the connectivity of fibers with positive response variable 
marginals in the case of
bivariate logistic regression.  Some numerical examples are given in
Section \ref{sec:examples}.  We conclude this paper with some
discussions in Section \ref{sec:discussions}.  Some detailed  proofs are
in Appendix.

\section{Univariate Poisson and logistic regressions}
\label{sec:preliminaries}
In this section we  summarize results on Markov basis of univariate Poisson regression
and the connectivity results for fibers with positive response variable
marginals of univariate logistic regression. We provide exact
statements  and detailed proofs of these results, 
because they are not explicitly given in literature and similar arguments
will be repeatedly applied to prove our main theorem in Section 
\ref{sec:connectivity}.

Consider univariate Poisson regression \citep{diaconis-eisenbud-sturmfels1998}
with the set of levels $\{1,\dots, J\}$ of a covariate.
The mean $\mu_j$ of  independent Poisson random variables $X_j$,
$j=1,\dots,J$, 
is modeled as
\[
\log \mu_j = \alpha + \beta j,  \quad j=1,\dots,J.
\]
The sufficient statistic for the models is $(\sum_{j=1}^J X_j, \sum_{j=1}^J j X_j)$.
The first component is the total sample size $n=\sum_{j=1}^J X_j$.
The configuration $A$, i.e.\ the matrix giving the relation between 
the observation  vector and the sufficient statistic,
for this model is given by 
\begin{equation}
\label{eq:configuratio-univariate-Poisson}
A = \begin{pmatrix} 1& 1 & \dots & 1 \\
                    1 & 2 & \dots & J \\
                  \end{pmatrix}.
\end{equation}
Now we show the minimum-fiber Markov basis
\citep{takemura-aoki-2005bernoulli}
for the   univariate Poisson regression.
The minimum-fiber Markov basis is the union of all minimal Markov bases.

\begin{proposition} 
 \label{prop:1}
 Let ${\Be_j}$ denote the contingency table with
 just $1$ frequency in the $j$-th cell.  The set of moves
 \begin{equation}
  \label{eq:zijji}
   {\cal B} =\{ \pm (\Be_{j_1}+\Be_{j_4} - \Be_{j_2}
   - \Be_{j_3}) \mid 1\le j_1 < j_2 \le j_3 < j_4 \le J, \ \ 
j_2-j_1=j_4-j_3\}
 \end{equation}
forms the minimum-fiber Markov basis for the
univariate Poisson regression.
\end{proposition}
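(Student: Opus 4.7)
The plan is to prove the proposition in two stages: first, that $\mathcal{B}$ is a Markov basis, and second, that every element of $\mathcal{B}$ belongs to some minimal Markov basis while no move outside $\mathcal{B}$ does. The initial check $\mathcal{B}\subset\ker_{\Z}A$ is immediate: each move has two $+1$ and two $-1$ entries (preserving the sample size) and $j_{2}-j_{1}=j_{4}-j_{3}$ rearranges to $j_{1}+j_{4}=j_{2}+j_{3}$ (preserving $\sum_{j}jx_{j}$).

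For the Markov basis part, I would introduce the convex potential $f(x):=\sum_{j=1}^{J}j^{2}x_{j}$. For any $m=e_{j_{1}}+e_{j_{4}}-e_{j_{2}}-e_{j_{3}}\in\mathcal{B}$, setting $d:=j_{2}-j_{1}=j_{4}-j_{3}\ge 1$ and using $d\le(j_{4}-j_{1})/2$ (from $j_{2}\le j_{3}$), one computes
\[
f(m^{+})-f(m^{-})=(j_{1}^{2}+j_{4}^{2})-(j_{2}^{2}+j_{3}^{2})=2d(j_{4}-j_{1}-d)>0,
\]
so the map $x\mapsto x-m$ strictly decreases $f$. I then show that whenever $\operatorname{supp}(x)$ contains two indices $j_{1}<j_{4}$ with $j_{4}-j_{1}\ge 2$, taking $j_{2}=j_{1}+1$ and $j_{3}=j_{4}-1$ yields an applicable such move in $\mathcal{B}$. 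Since $f$ is bounded below on each fiber, iterating reaches a canonical table $x^{*}$ with $\operatorname{supp}(x^{*})\subseteq\{k,k+1\}$ for some $k$; such an $x^{*}$ is uniquely determined by the sufficient statistics $(n,W)$ (take $k=\lfloor W/n\rfloor$). Hence every fiber element is connected to the canonical one via $\mathcal{B}$-moves, establishing that $\mathcal{B}$ is a Markov basis.

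For the minimum-fiber claim, note that $\mathcal{B}$ is exactly the set of degree-$2$ elements of $\ker_{\Z}A$ up to sign, since any such element $e_{i}+e_{k}-e_{i'}-e_{k'}$ with $i+k=i'+k'$ relabels into the form \eqref{eq:zijji}. For each sum-$2$ fiber with weighted sum $s$, the pairs $\{e_{j}+e_{s-j}\}$ are the elements and any two distinct ones are joined by a move of $\mathcal{B}$, so the move graph is a complete graph; every edge lies on some spanning tree, so every $m\in\mathcal{B}$ belongs to some minimal Markov basis. Conversely, I claim any move $z$ of degree $\ge 3$ is redundant in every Markov basis $G$ containing it: the degree-$2$ moves $G_{2}\subseteq G$ must span every sum-$2$ fiber (since only degree-$2$ moves act on those fibers), and then a version of the canonical-form argument using only the spanning-tree edges available in $G_{2}$ shows that $G_{2}$ already connects every larger fiber. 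Hence $G\setminus\{z\}\supseteq G_{2}$ is still a Markov basis, so $G$ is not minimal.

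The most delicate point will be this final redundancy step: verifying that the canonical-form reduction goes through using only the moves of $G_{2}$. The key observation is that for every pair $j_{1}<j_{4}$ with $j_{4}-j_{1}\ge 2$, any spanning tree of the sum-$2$ fiber of weighted sum $j_{1}+j_{4}$ contains at least one edge incident to $e_{j_{1}}+e_{j_{4}}$, yielding a move of the form $e_{j_{1}}+e_{j_{4}}-e_{j_{2}'}-e_{j_{3}'}$ in $G_{2}$; applying this move to any $x$ with $x_{j_{1}},x_{j_{4}}\ge 1$ still strictly decreases $f$ by the formula above, so the iteration terminates at the canonical form using only moves from $G_{2}$.
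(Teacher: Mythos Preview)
Your approach differs from the paper's. For the Markov-basis part, the paper reduces the $L_1$ distance between two arbitrary tables in a fiber, whereas you drive every table to a unique canonical form by decreasing the quadratic potential $f(x)=\sum_j j^2 x_j$; this part of your argument is correct and pleasantly explicit. For the minimum-fiber part, the paper appeals in one line to the Takemura--Aoki structure of minimal Markov bases via the degree-$2$ fibers $\mathcal{F}_{2,c}$, while you attempt a self-contained redundancy argument.

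That final redundancy step has a genuine error. You claim that for any subset $G_2$ spanning each sum-$2$ fiber, the spanning-tree edge incident to $e_{j_1}+e_{j_4}$ always yields an $f$-decreasing move, invoking ``the formula above''. But that formula was derived under the hypothesis $j_1<j_2\le j_3<j_4$, i.e.\ that the neighbouring vertex $e_{j_2'}+e_{j_3'}$ is \emph{less} spread out than $e_{j_1}+e_{j_4}$; nothing forces this for an arbitrary spanning tree. Concretely, with $J\ge 7$ and the fiber of weighted sum $8$ (vertices $e_1+e_7$, $e_2+e_6$, $e_3+e_5$, $2e_4$), take the star tree centred at $e_1+e_7$. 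For $j_1=3$, $j_4=5$ the only incident edge goes to $e_1+e_7$, and the corresponding move $x\mapsto x-e_3-e_5+e_1+e_7$ \emph{increases} $f$ by $16$. So your iteration, as written, need not terminate. The remedy is to replace the single edge by the whole $G_2$-path from $v_0=e_{j_1}+e_{j_4}$ to the fiber's $f$-minimising vertex $v^*$: writing $x=(x-e_{j_1}-e_{j_4})+v_0$, each intermediate table $(x-e_{j_1}-e_{j_4})+v_i$ remains nonnegative because both summands are, and the net change in $f$ over the path is $f(v^*)-f(v_0)<0$. Iterating this macro-step then reaches the canonical form using only moves from $G_2$, which repairs your argument.
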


\begin{proof} We employ the distance reducing argument of 
 \cite{takemura-aoki-2005bernoulli}.
  Let $x=(x_1, \dots, x_J)$ and $y=(y_1, \dots, y_J)$, $x\neq y$, be
  two data sets in the same fiber.  Let $j_1 = \min\{ k \mid x_k \neq
  y_k \}$.  Consider the case $x_{j_1} > y_{j_1}$. The case $x_{j_1} < y_{j_1}$ can be
  handled by interchanging the roles of $x$ and $y$. Because the total
  sample size $n$ is the same in $x$ and $y$ (i.e.\ $n=\sum_{k=1}^J x_k =
  \sum_{k=1}^J y_k$), there exists some $j_2 > j_1$ such that
  $x_{j_2} < y_{j_2}$.  Choose the smallest such $j_2$.  Now suppose that 
  $x_k \le y_k$ for all $k\ge j_2$.  Then
\begin{align*}
 0 &= \sum_{k=1}^J k (y_k - x_k) \ge \sum_{k=j_2}^J j_2(y_k - x_k)
         - \sum_{k=1}^{j_2-1} (j_2-1)(x_k - y_k)\\
   &= j_2 \sum_{k=1}^J (y_k - x_k) + \sum_{k=1}^{j_2-1} (x_k - y_k)=
\sum_{k=1}^{j_2-1} (x_k - y_k) > 0, 
 \end{align*}
 which is a contradiction. Therefore there exists some $j_4>j_2$, such that
 $x_{j_4} > y_{j_4}$.  Define $j_3:=j_1+j_4-j_2$. Then
 $\Be_{j_1}+\Be_{j_4} - \Be_{j_2} - \Be_{j_3}$ can be subtracted from $x$ and the
 $L_1$ distance 
 to $y$ becomes smaller.  This proves that $\cal B$
 forms a Markov basis.

 Now consider a fiber ${\cal F}_{2,c}$ 
 with sample size $n=2$ and a particular value of
 $c=\sum_{k=1}^k k x_k$.   This fiber is written as
 \[
 {\cal F}_{2,c}= \{ \Be_j+\Be_j' \mid 1\le j \le j'\le J, \ j+j'=c \}
 \]
 and $\cal B$ consists of all the differences of two elements of
 these fibers.
 Since $\cal B$ forms a Markov basis,
 every minimal Markov basis needs to connect only these fibers.
 This proves that $\cal B$ is the minimum-fiber Markov basis
\end{proof}

We now consider univariate logistic regression
\citep{chen-dinwoodie-dobra-huber2005}.
Let $\{1,\dots, J\}$ be the set levels of a covariate and
let  $X_{1j}$ and $X_{2j}$, $j=1,\ldots,J$, be the numbers of successes and
failures, respectively. The probability for success
$p_j$ is modeled as
\[
 \mathrm{logit}(p_j) = 
 \log
 \frac{p_j}{1-p_j}
 =
 \alpha + \beta j , \qquad j=1,\dots,J.
 \]
The sufficient statistics for the model is 
$(X_{1+}, X_{+1},\dots,X_{+J},  \sum_{j=1}^J j X_{+j})$. 
Hence moves $z=(z_{ij})$ for the model satisfy 
$(z_{1+}, z_{+1},\dots,z_{+J}) = 0$ and 
\begin{equation}
 \label{linear-constraint-1}
 \sum_{j=1}^J j z_{+j} = 0.
\end{equation}
The configuration for this model is the Lawrence lifting 
$\Lambda(A)$ of $A$ in  (\ref{eq:configuratio-univariate-Poisson}): 
\begin{equation}
\label{def:A}
 \Lambda(A)= 
 \begin{pmatrix}
  A & 0 \\
  E_J & E_J
 \end{pmatrix}, 
 \qquad
 A = \begin{pmatrix} 
      1& 1 & \dots & 1 \\
      1 & 2 & \dots & J \\
      \end{pmatrix},
\end{equation}
where $E_J$ denotes the $J\times J$ identity matrix.

In general Markov bases of $\Lambda(A)$ become very complicated.
In usual applications of the logistic regression model, however, 
$X_{+j} := X_{1j} + X_{2j}$ is fixed by a sampling scheme 
and positive.
\cite{chen-dinwoodie-dobra-huber2005} showed that a simple subset of
Markov bases of $\Lambda(A)$ guarantees the connectivity of all fibers
satisfying 
$(X_{+1},\dots,X_{+J}) > 0$, where 
the inequality ``$>0$'' means that 
every element is positive.

Let $\bm{e}_j$ be redefined by a $2 \times J$ integer array with 
1 in the $(1,j)$-cell and $-1$ in the $(2,j)$-cell.
Then we can show that the  set of moves in (\ref{eq:zijji})
connects all fibers 
with  $(X_{+1}, \dots, X_{+J}) > 0$.
More strongly, the set of moves
is norm-reducing \citep{takemura-aoki-2005bernoulli}
for any two tables $x,y$ in any fiber with positive
marginals, i.e.\ we can make the $L_1$ distance between $x$ and $y$
smaller by a move from the set.

\begin{proposition} 
 \label{prop:uni-1}
 The set of moves
 \begin{equation}
  \label{eq:zijji2}
   {\cal B}_{\Lambda(A)} =\{ \pm (\Be_{j_1} + \Be_{j_4} - \Be_{j_2} -
   \Be_{j_3})  
   \mid 1\le j_1 < j_2 \le j_3 < j_4 \le J, \ \  j_2 - j_1=j_4 -j_3\}
 \end{equation}
 is norm-reducing for all fibers with $(X_{+1}, \dots,X_{+J}) > 0$ 
 for the univariate logistic
 regression model. 
\end{proposition}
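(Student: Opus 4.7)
I will adapt Proposition~\ref{prop:1}'s distance-reducing argument to the $2\times J$ setting by working with the row-$1$ difference $d_j := x_{1j} - y_{1j}$. Since $x_{+j} = y_{+j}$ for every $j$, we have $x_{2j} - y_{2j} = -d_j$ and $\|x-y\|_1 = 2\sum_j |d_j|$, while the sufficient-statistic constraints force $\sum_j d_j = 0$ and $\sum_j j\,d_j = 0$ --- exactly the constraints governing the difference considered in the proof of Proposition~\ref{prop:1}. Applying that argument to $d$, after possibly interchanging $x$ and $y$ I obtain indices $j_1 < j_2 \leq j_3 < j_4$ with $j_2-j_1 = j_4-j_3$, $d_{j_1}, d_{j_4} > 0$, and $d_{j_2} < 0$, and hence a candidate move $b = \Be_{j_1} + \Be_{j_4} - \Be_{j_2} - \Be_{j_3} \in \mathcal{B}_{\Lambda(A)}$.

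Applying $x-b$ then changes the $L_1$-distance to $y$ by $-2$ at each of the columns $j_1, j_2, j_4$ (one unit in each row, using the signs of $d$) and by at most $+2$ at $j_3$, for a net change of at most $-4$. Thus norm-reduction follows the moment $x-b$ is feasible. The conditions $x_{1, j_1}, x_{1, j_4} \geq 1$ and $x_{2, j_2} \geq 1$ are immediate from the signs of $d_{j_1}, d_{j_4}, d_{j_2}$ combined with the column-marginal equality $x_{+j} = y_{+j}$; only $x_{2, j_3} \geq 1$ is non-trivial, and this is where positivity of the column marginals enters.

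If $d_{j_3} < 0$, then $x_{2, j_3} \geq 1$ automatically. If $d_{j_3} = 0$, then $X_{+j_3} > 0$ forces either $x_{2, j_3} \geq 1$ (so $x-b$ is feasible) or $x_{1, j_3} \geq 1 = y_{1, j_3}$ (in which case the equivalent move $y+b$ is feasible, since $y_{1, j_3} \geq 1$ suffices, and yields the same new distance to $x$). The delicate case is $d_{j_3} > 0$ with $x_{2, j_3} = 0$ and $y_{1, j_3} = 0$, which forces $d_{j_3} = x_{+j_3}$. Then $j_3$ is itself a positive coordinate of $d$, so one re-chooses the quadruple letting $j_3$ play the role of a new $j_1$ or $j_4$; alternatively, when $d$ has a unique negative index $j^{-}$, the constraints $\sum_j d_j = \sum_j j\,d_j = 0$ force at least two positive entries and $|d_{j^-}| \geq 2$, allowing a collapsed move $\Be_{j_1'} + \Be_{j_4'} - 2\Be_{j^-}$ with $j_1', j_4'$ positive indices summing to $2j^{-}$. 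I expect the main technical obstacle to be packaging this re-choice into a clean argument that simultaneously handles every configuration of $d$ allowed by the linear constraints and the positivity of all column marginals.
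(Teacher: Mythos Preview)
Your setup is correct and matches the paper's: reduce to the row-$1$ difference $d_j=z_{1j}$ with $\sum_j d_j=\sum_j j\,d_j=0$, then import the Proposition~\ref{prop:1} argument to locate indices with the right signs and attempt the move on $x$ or on $y$ according to which side of the uncontrolled column has a positive entry. The paper's proof does exactly this.

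The genuine gap is your ``delicate case'' $d_{j_3}>0$ with $x_{2,j_3}=y_{1,j_3}=0$. Your proposed fixes do not close it: re-choosing the quadruple with $j_3$ now playing a positive role carries no termination guarantee (the newly constructed index may again land on a positive coordinate with the same obstruction), and the special treatment for a unique negative index $j^-$ still requires feasibility at the new $j_1',j_4'$, which is not automatic.

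The paper avoids the obstruction not by iterating a re-choice but by \emph{sharpening the initial choice of indices}. Lemma~\ref{lemma:appendix-1} produces $j_1<j_2\le j_3<j_4$ with $d_{j_1},d_{j_4}>0$, $d_{j_2},d_{j_3}<0$, and crucially $d_j=0$ for all $j_3<j<j_4$ (and $j_1<j<j_2$). Assuming by symmetry $j_2-j_1\le j_4-j_3$, it then sets $j_5:=j_4-(j_2-j_1)$, which is forced to lie in $\{j_3,\dots,j_4-1\}$; hence $d_{j_5}\le 0$ always. With the move $z'=-\Be_{j_1}+\Be_{j_2}+\Be_{j_5}-\Be_{j_4}$, the only unforced entry is at column $j_5$: if $d_{j_5}<0$ or $x_{2,j_5}>0$ one applies $z'$ to $x$; otherwise $d_{j_5}=0$ and positivity of $X_{+j_5}$ gives $y_{1,j_5}=x_{1,j_5}>0$, so one applies $-z'$ to $y$. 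Thus the delicate case simply never occurs, because the ``gap'' condition on $d$ pins the constructed index into a region of known nonpositive sign. Inserting this refinement (choosing $(j_3,j_4)$ as an adjacent negative--positive pair separated only by zeros, and letting the balancing index fall into that zero stretch) is what your argument is missing.
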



To prove this proposition, we present a simple lemma.

\begin{lemma}
 \label{lemma:appendix-1}
 Let $z = \{z_{ij}\}$ be any move for the univariate logistic regression.
 Then there exist $j_1 < j_2$ and $j_3 < j_4$ satisfying the following
 conditions. 
 \begin{enumerate}
 \setlength{\itemsep}{0pt}
  \item[{\rm (a)}] $z_{1 j_1} > 0$, $z_{1 j_2} < 0$, $z_{1 j_3} < 0$, 
        $z_{1 j_4} > 0$ ;
  \item[{\rm (b)}] $z_{1 j_1} = 1$ implies $j_1 \neq j_4$ ;
  \item[{\rm (c)}] $z_{1 j_2} = -1$ implies $j_2 \neq j_3$ ;
  \item[{\rm (d)}] $z_{1j} = 0$ for $j_1 < j < j_2$ and $j_3 < j < j_4$.
 \end{enumerate}
\end{lemma}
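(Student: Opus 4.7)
The plan is to exploit the fact that a move $z$ is determined by its first row $\bar z := (z_{11},\ldots,z_{1J})$, which satisfies the two linear relations $\sum_j \bar z_j = 0$ and $\sum_j j\, \bar z_j = 0$ (the latter coming from (\ref{linear-constraint-1}) via $z_{2j} = -z_{1j}$). The proof will proceed in three steps: (i) show by a center-of-mass argument that the signed sequence of non-zero entries of $\bar z$ contains both a ``$+$ to $-$'' and a ``$-$ to $+$'' pair of consecutive non-zero indices; (ii) extract $(j_1,j_2)$ from a ``$+$ to $-$'' transition and $(j_3,j_4)$ from a ``$-$ to $+$'' transition, so that (a) and (d) hold by construction; (iii) verify the non-equality conditions (b) and (c) in the one forced degenerate case.

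For step (i), set $P := \{j : \bar z_j > 0\}$ and $N := \{j : \bar z_j < 0\}$, both non-empty since $\bar z \neq 0$ and $\sum_j \bar z_j = 0$. The two linear relations rewrite as $\sum_{j\in P} \bar z_j = \sum_{j\in N} |\bar z_j|$ and $\sum_{j\in P} j\,\bar z_j = \sum_{j\in N} j\,|\bar z_j|$, so the common weighted mean index over $P$ equals that over $N$. This rules out both $\max P < \min N$ and $\max N < \min P$. Hence the sign sequence of non-zero entries of $\bar z$, read left-to-right, contains at least one ``$+$ to $-$'' and at least one ``$-$ to $+$'' pair of adjacent (in the non-zero support) indices, from which the required $(j_1,j_2)$ and $(j_3,j_4)$ can be read off in step (ii).

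For step (iii), condition (b) is vacuous whenever $|P| \geq 2$, since one can then choose the two transitions so that $j_1 \neq j_4$. The only obstruction is $|P| = 1$: writing $P = \{p\}$ forces $j_1 = j_4 = p$, and $\bar z_p = \sum_{j\in N} |\bar z_j|$. But $|N| \geq 2$ in this case, since $|N| = |P| = 1$ would force the two linear relations to place the unique positive and negative index at the same $j$, a contradiction. Hence $\bar z_{j_1} = \bar z_p \geq |N| \geq 2$, so the hypothesis of (b) fails and (b) holds vacuously. Condition (c) follows symmetrically by swapping the roles of $P$ and $N$. The main delicacy in the plan is exactly this degeneracy check; without it one cannot handle moves in which $\bar z$ has a single positive (resp.\ negative) entry, while the center-of-mass argument and the transition extraction are otherwise automatic.
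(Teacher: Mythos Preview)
Your overall strategy---reading off $(j_1,j_2)$ and $(j_3,j_4)$ from adjacent sign changes in the nonzero support of $\bar z$, using the two linear constraints to rule out a monotone sign pattern---is the same idea the paper uses. The paper's proof is terser: it simply fixes $(j_1,j_2)$ to be the \emph{leftmost} ``$+$ to $-$'' transition (so $z_{1j}\ge 0$ for all $j<j_2$) and then asserts that a ``$-$ to $+$'' transition $(j_3,j_4)$ with $j_3\ge j_2$ exists. That specific ordering $j_1<j_2\le j_3<j_4$ makes (b) automatic, and (c) is dismissed as ``obvious.'' Your proposal is more explicit about why both kinds of transitions exist (the center-of-mass argument) and gives a genuine reason for the degenerate cases $|P|=1$ or $|N|=1$, which the paper does not spell out.

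There is, however, a gap in your step (iii). You argue (b) and (c) \emph{separately}: when $|P|\ge 2$ you say one can choose the transitions so that $j_1\neq j_4$, and symmetrically when $|N|\ge 2$ one can choose $j_2\neq j_3$. But the lemma asks for a \emph{single} choice of $(j_1,j_2)$ and $(j_3,j_4)$ satisfying all four conditions at once, and you never check that the choice guaranteeing $j_1\neq j_4$ is compatible with the one guaranteeing $j_2\neq j_3$. Concretely, consider a sign pattern with four runs $+\,-\,+\,-$ where the two middle runs are singletons: the only ``$-$ to $+$'' transition is between the middle runs, so taking $(j_1,j_2)$ from the first ``$+$ to $-$'' forces $j_2=j_3$, while taking it from the second forces $j_1=j_4$. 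Your argument would need an extra step here (for instance, showing that in any valid move with this run structure one of the middle entries has absolute value $\ge 2$, which does follow from the weighted-sum constraint but is not immediate). The easiest fix is to adopt the paper's device: pick $(j_1,j_2)$ leftmost and $(j_3,j_4)$ with $j_3\ge j_2$, so that $j_1<j_4$ is forced and only (c) needs the $|N|$ case split.
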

\begin{proof}
 (a), (b) and (c) are obvious from the constraint 
 (\ref{linear-constraint-1}) and $z_{1+} = 0$.
 We can assume without loss of generality that there exist $j_1 < j_2$ 
 such that $z_{1 j_1} > 0$, $z_{1 j_2} < 0$, $z_{1 j} \ge 0$ for 
 $1 \le j < j_2$  and $z_{1j} = 0$ for $j_1 < j < j_2$.
 Since there exist $j_2 \le j_3 < j_4$ satisfying (a), (b) and (c), 
 we can choose $j_3$ and $j_4$ to satisfy (d). 
\end{proof}

We now give a proof of Proposition \ref{prop:uni-1}.

\begin{proof}[Proof of Propsition \ref{prop:uni-1}]
We employ the distance reducing argument of 
\cite{takemura-aoki-2005bernoulli}. 
Let $x$ and $y$ be two tables in the same fiber.
Then $z := x - y$ is a move. 
We can assume without loss of generality that there exist 
$j_1 < j_2 \le j_3 < j_4$ which satisfy the conditions of 
Lemma \ref{lemma:appendix-1} and $j_2 - j_1 \le j_4 - j_3$.
Define $j_5$  as $j_5 := j_4 - (j_2 - j_1)$. 
Then by applying a move
\[
 z' := - \bm{e}_{j_1} + \bm{e}_{j_2} + \bm{e}_{j_5} - \bm{e}_{j_4}, 
\]
we can reduce the $L_1$ distance between $x$ and $y$, because at least
one of the following operations can be performed to $x$ or $y$:
\[
 \begin{array}{rrrrr}
  & j_1 & j_2 & j_5 & j_4 \\ \cline{2-5}
   i=1 & \multicolumn{1}{|r}{+} & 0+ & 0+ & \multicolumn{1}{r|}{+} \\
  i=2 & \multicolumn{1}{|r}{0+} & + & + & \multicolumn{1}{r|}{0+} \\
\cline{2-5}
 \end{array}
\quad +
 \begin{array}{rrrrr}
  \\ \cline{2-5}
  & \multicolumn{1}{|r}{-1} & 1 & 1 & \multicolumn{1}{r|}{-1} \\
  & \multicolumn{1}{|r}{1} & -1  & -1 & \multicolumn{1}{r|}{1} \\
\cline{2-5}
 \end{array}
\quad = 
 \begin{array}{rrrrr}
  \\ \cline{2-5}
  & \multicolumn{1}{|r}{0+} & + &  + & \multicolumn{1}{r|}{0+} \\
  & \multicolumn{1}{|r}{+} & 0+  & 0+ & \multicolumn{1}{r|}{+} \\
\cline{2-5}
 \end{array}
\]

\[
 \begin{array}{rrrrr}
  & j_1 & j_2 & j_5 & j_4 \\ \cline{2-5}
   i=1 & \multicolumn{1}{|r}{0+} & + & + & \multicolumn{1}{r|}{0+} \\
  i=2 & \multicolumn{1}{|r}{+} & 0+ & 0+ & \multicolumn{1}{r|}{+} \\
\cline{2-5}
 \end{array}
\quad +
 \begin{array}{rrrrr}
  \\ \cline{2-5}
  & \multicolumn{1}{|r}{1} & -1 & -1 & \multicolumn{1}{r|}{1} \\
  & \multicolumn{1}{|r}{-1} & 1  & 1 & \multicolumn{1}{r|}{-1} \\
\cline{2-5}
 \end{array}
\quad = 
 \begin{array}{rrrrr}
  \\ \cline{2-5}
  & \multicolumn{1}{|r}{+} & 0+ &  0+ & \multicolumn{1}{r|}{+} \\
  & \multicolumn{1}{|r}{0+} & +  & + & \multicolumn{1}{r|}{0+} \\
\cline{2-5}
 \end{array}
\]
where $0+$ denote that the cell frequency is nonnegative.
\end{proof}

\bigskip
\cite{chen-dinwoodie-dobra-huber2005} introduced a subset of 
${\cal B}$ which still connects all fibers with $X_{+j} > 0, \forall j$. 
\cite{chen-dinwoodie-dobra-huber2005} did not 
give a proof of the following theorem.

\begin{theorem}[\cite{chen-dinwoodie-dobra-huber2005}]
 \label{theorem:univariate}
 The set of moves
 \begin{equation}
  \label{eq:zijji3}
   {\cal B}_0 =\{ z \in {\cal B} \mid j_2 = j_1+1, j_3 = j_4-1\}
 \end{equation}
 connects every  fiber satisfying $(X_{+1},\dots,X_{+J}) > 0$ 
 for the univariate logistic
 regression model. 
\end{theorem}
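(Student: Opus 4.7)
The plan is to reduce Theorem~\ref{theorem:univariate} to Proposition~\ref{prop:uni-1} by showing that every application of a move $M \in {\cal B}$ can be simulated, on the subgraph of nonnegative tables with positive column marginals, by a sequence of moves in ${\cal B}_0$. Since both ${\cal B}$ and ${\cal B}_0$ consist of moves $z$ with $z_{+j} = 0$ for all $j$, column marginals are invariant under any such simulation, so the hypothesis $(X_{+1}, \dots, X_{+J}) > 0$ is automatically passed to every intermediate table. Combined with the ${\cal B}$-connectivity of every positively-margined fiber provided by Proposition~\ref{prop:uni-1}, this yields ${\cal B}_0$-connectivity.

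Concretely, fix a nonnegative table $x$ and a move $M = \Be_{j_1} - \Be_{j_2} - \Be_{j_3} + \Be_{j_4} \in {\cal B}$ with step $d := j_2 - j_1 = j_4 - j_3$ and $x + M \ge 0$. I would prove by induction on $d$ that there is a ${\cal B}_0$-path from $x$ to $x + M$ through nonnegative tables. The base case $d = 1$ is trivial because $M \in {\cal B}_0$. For $d \ge 2$ I use the four ``peeling'' decompositions $M = m_\star + M'_\star$ with $m_\star \in {\cal B}_0$ and $M'_\star \in {\cal B}$ of step $d - 1$, where $\star \in \{A, B, C, D\}$ encodes whether, on each of the left and right ends, one peels the outer or the inner intermediate column:
\begin{align*}
 m_A &= \Be_{j_1} - \Be_{j_1+1} - \Be_{j_4-1} + \Be_{j_4}, &
 m_B &= \Be_{j_2-1} - \Be_{j_2} - \Be_{j_3} + \Be_{j_3+1}, \\
 m_C &= \Be_{j_1} - \Be_{j_1+1} - \Be_{j_3} + \Be_{j_3+1}, &
 m_D &= \Be_{j_2-1} - \Be_{j_2} - \Be_{j_4-1} + \Be_{j_4}.
\end{align*}
The assumption $x + M \ge 0$ forces $x_{1, j_2}, x_{1, j_3} \ge 1$ and $x_{2, j_1}, x_{2, j_4} \ge 1$, so the four endpoint conditions in every $m_\star$ are automatic. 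The remaining applicability conditions involve the four intermediate columns $j_1+1, j_2-1, j_3+1, j_4-1$; at each of these the positive-marginal hypothesis forces at least one of the two cells to be positive. A finite case split on which row is positive at $(j_1+1, j_2-1)$ on the left, and independently on the right, matches each pattern to some applicable $m_\star$ (for $d = 2$ the columns $j_1+1$ and $j_2-1$ coincide, so this reduces to a simpler $2 \times 2$ check). Applying $m_\star$ and invoking the induction hypothesis on $M'_\star$ completes the inductive step.

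The main obstacle is the adversarial configuration that may arise only for $d \ge 3$: $x_{1, j_1+1} = 0$ and $x_{2, j_2-1} = 0$ simultaneously (or the symmetric situation on the right). In this case none of $m_A, \dots, m_D$ is applicable, even though the positive column-marginal hypothesis is met at every individual column. To handle it I would insert a preparatory ${\cal B}_0$-move whose support lies among the intermediate columns (for instance $m_{j_1+1, j_3+1}$, choosing the right endpoint on the non-adversarial side so that its applicability is forced by the positive marginals there), proceed with the induction on the modified move, and finally cancel the preparation at the end of the path. Verifying that this preparation-and-cancellation round-trip preserves nonnegativity at every intermediate table---once more by exploiting positive column marginals at the columns it touches---is where the technical work of the proof concentrates.
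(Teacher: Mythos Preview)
Your overall architecture coincides with the paper's: reduce to Proposition~\ref{prop:uni-1}, induct on the step $d=j_2-j_1$, and peel one unit from each end via a ${\cal B}_0$-move to drop to step $d-1$. Your four moves $m_A,\dots,m_D$ are exactly the paper's ``direct'' reductions (the paper also uses the reverse direction, applying the move at $x+M$ rather than at $x$; you only mention one direction, which enlarges your adversarial region unnecessarily).

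The genuine gap is in the adversarial case. Applicability of a ${\cal B}_0$-move $\Be_a-\Be_{a+1}-\Be_{b-1}+\Be_b$ imposes sign conditions at \emph{all four} of its columns, in particular at the inner columns $a+1$ and $b-1$; so ``choosing the right endpoint on the non-adversarial side'' does not by itself force applicability of any preparatory move---you still need, e.g., $x_{1,a+1}\ge 1$, which is exactly the kind of condition that fails in the adversarial pattern. Your notation $m_{j_1+1,j_3+1}$ is not a well-defined ${\cal B}_0$-element, and the cancellation idea as sketched does not close. What the paper does instead is use the positive-marginal hypothesis at \emph{every} intermediate column, not just the four adjacent ones: in the bad pattern one has (say) $x_{1,j_3+1}>0$ and $x_{2,j_4-1}>0$, and walking from $j_3+1$ to $j_4-1$ there must exist a transition index $j_5$ with $x_{1,j_5}>0$ and $x_{2,j_5+1}>0$. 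One then routes through $j_5$ with two ${\cal B}_0$-moves,
\[
-\Be_{j_1}+\Be_{j_1+1}+\Be_{j_5}-\Be_{j_5+1}
\quad\text{followed by}\quad
-\Be_{j_5}+\Be_{j_5+1}+\Be_{j_4-1}-\Be_{j_4},
\]
whose sum is $-m_A$; both steps are now applicable precisely because of the pivot. This pivot argument is the missing ingredient in your plan.
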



\begin{proof}
It suffices to show that any move in $z \in {\cal B}_{\Lambda(A)}$ 
of  Proposition \ref{prop:uni-1}
can be
replaced by a series of moves in ${\cal B}_0$. To prove this 
it  suffices to show that the $L_1$ norm of any move
$z \in {\cal B}_{\Lambda(A)}$, i.e., the $L_1$ distance between the
positive part $x=z^+$ and the negative part $y=z^-$ of $z \in {\cal B}_{\Lambda(A)}$,
is reduced by moves in ${\cal B}_0$.
Denote $z := \Be_{j_1} - \Be_{j_2} - \Be_{j_3} + \Be_{j_4}$. 
We can assume without loss of generality that 
$j_1 < j_2 \le j_3 < j_4$.
We prove it by the induction on 
$\delta := j_2 - j_1 = j_4 - j_3 \ge 2$.

When $(x_{1,j_1+1},x_{1,j_4-1}) > 0$ or 
$(x_{2,j_1+1},x_{2,j_4-1}) > 0$, 
we can apply 
$z' := - \bm{e}_{j_1} + \bm{e}_{j_1+1} + \bm{e}_{j_4-1} - \bm{e}_{j_4}
$
to $z$ and   
$$
z + z' := 
\bm{e}_{j_1+1} - \bm{e}_{j_2} - \bm{e}_{j_3} + \bm{e}_{j_4-1}
$$
as seen from the picture below, where $z_{ij} = 0^*$ denotes that 
$x_{ij} = y_{ij} >0$.

\[
 \begin{array}{rrrrr}
  & j_1 & j_1 + 1 & j_4 - 1 & j_4 \\ \cline{2-5}
   i=1 & \multicolumn{1}{|r}{1} & 0^* & 0^*
   & \multicolumn{1}{r|}{1} \\
   i=2 & \multicolumn{1}{|r}{-1} & 0\hspace{0.15cm}  & 0\hspace{0.15cm}
    & \multicolumn{1}{r|}{-1} \\ 
\cline{2-5}
 \end{array}
\quad +
 \begin{array}{rrrrr}
  \\ \cline{2-5}
  & \multicolumn{1}{|r}{-1} & 1 & 1 & \multicolumn{1}{r|}{-1} \\
  & \multicolumn{1}{|r}{1} & -1  & -1 & \multicolumn{1}{r|}{1} \\
\cline{2-5}
 \end{array}
\quad = 
 \begin{array}{rrrrr}
  \\ \cline{2-5}
  & \multicolumn{1}{|r}{0} & 1 &  1 & \multicolumn{1}{r|}{0} \\
  & \multicolumn{1}{|r}{0} & -1  & -1 & \multicolumn{1}{r|}{0} \\
\cline{2-5}
 \end{array}
\]

\[
 \begin{array}{rrrrr}
  & j_1 & j_1 + 1 & j_4 - 1 & j_4 \\ \cline{2-5}
   i=1 & \multicolumn{1}{|r}{1} & 0\hspace{0.15cm} & 0\hspace{0.15cm}
   & \multicolumn{1}{r|}{1} \\
   i=2 & \multicolumn{1}{|r}{-1} & 0^*  & 0^* & \multicolumn{1}{r|}{-1} \\
\cline{2-5}
 \end{array}
\quad +
 \begin{array}{rrrrr}
  \\ \cline{2-5}
  & \multicolumn{1}{|r}{-1} & 1 & 1 & \multicolumn{1}{r|}{-1} \\
  & \multicolumn{1}{|r}{1} & -1  & -1 & \multicolumn{1}{r|}{1} \\
\cline{2-5}
 \end{array}
\quad = 
 \begin{array}{rrrrr}
  \\ \cline{2-5}
  & \multicolumn{1}{|r}{0} & 1 &  1 & \multicolumn{1}{r|}{0} \\
  & \multicolumn{1}{|r}{0} & -1  & -1 & \multicolumn{1}{r|}{0} \\
\cline{2-5}
 \end{array}
\]
From the inductive assumption, we can reduce 
the $L_1$ norm of $z+z'$ by moves in ${\cal B}_0$.

When $(x_{1,j_1+1},x_{2,j_3+1}) > 0$ or 
$(x_{2,j_1+1},x_{1,j_3+1}) > 0$, 
we can apply 
$z' := - \bm{e}_{j_1} + \bm{e}_{j_1+1} + \bm{e}_{j_3} - \bm{e}_{j_3+1}$
to $z$ and   
$$
z + z' := 
\bm{e}_{j_1+1} - \bm{e}_{j_2} - \bm{e}_{j_3+1} + \bm{e}_{j_4}
$$
as seen from the picture below. 
\[
 \begin{array}{rrrrr}
  & j_1 & j_1+1 & j_3 & j_3+1 \\ \cline{2-5}
   i=1 & \multicolumn{1}{|r}{1} & 0^* & -1
   & \multicolumn{1}{r|}{0\hspace{0.15cm}} \\
   i=2 & \multicolumn{1}{|r}{-1} & 0\hspace{0.15cm}  & 1 &
    \multicolumn{1}{r|}{0^*} \\ 
\cline{2-5}
 \end{array}
\quad +
 \begin{array}{rrrrr}
  \\ \cline{2-5}
  & \multicolumn{1}{|r}{-1} & 1 & 1 & \multicolumn{1}{r|}{-1} \\
  & \multicolumn{1}{|r}{1} & -1  & -1 & \multicolumn{1}{r|}{1} \\
\cline{2-5}
 \end{array}
\quad = 
 \begin{array}{rrrrr}
  \\ \cline{2-5}
  & \multicolumn{1}{|r}{0} & 1 & 0 & \multicolumn{1}{r|}{-1} \\
  & \multicolumn{1}{|r}{0} & -1  & 0 & \multicolumn{1}{r|}{1} \\
\cline{2-5}
 \end{array}
\]

\[
 \begin{array}{rrrrr}
  & j_1 & j_1+1 & j_3 & j_3+1 \\ \cline{2-5}
   i=1 & \multicolumn{1}{|r}{1} & 0\hspace{0.15cm} & -1
   & \multicolumn{1}{r|}{0^*} \\
   i=2 & \multicolumn{1}{|r}{-1} & 0^*  & 1 & \multicolumn{1}{r|}{0\hspace{0.15cm}} \\
\cline{2-5}
 \end{array}
\quad +
 \begin{array}{rrrrr}
  \\ \cline{2-5}
  & \multicolumn{1}{|r}{-1} & 1 & 1 & \multicolumn{1}{r|}{-1} \\
  & \multicolumn{1}{|r}{1} & -1  & -1 & \multicolumn{1}{r|}{1} \\
\cline{2-5}
 \end{array}
\quad = 
 \begin{array}{rrrrr}
  \\ \cline{2-5}
  & \multicolumn{1}{|r}{0} & 1 &  0 & \multicolumn{1}{r|}{-1} \\
  & \multicolumn{1}{|r}{0} & -1  & 0 & \multicolumn{1}{r|}{1} \\
\cline{2-5}
 \end{array}
\]
From the inductive assumption, we can reduce 
the $L_1$ norm of $z+z'$ by moves in ${\cal B}_0$.
In the case that 
$(x_{1,j_2-1},x_{2,i_4-1}) > 0$ or 
$(x_{2,j_2-1},x_{1,i_4-1}) > 0$, 
the proof is similar.

Suppose that 
$$
x_{2,j_1+1}=x_{1,j_2-1}=x_{2,j_3+1}=x_{1,j_4-1}=0.
$$
We note that this implies that 
$$
x_{1,j_1+1} > 0,\quad  
x_{2,j_2-1} > 0,\quad  
x_{1,j_3+1}> 0,\quad  
x_{2,j_4-1}> 0.
$$
Then there exists $j_3 < j_5 < j_4$ such that 
$$
x_{1 j_5} > 0,  \quad x_{2,j_5+1} > 0
$$
as in the following picture. 
\[
 \begin{array}{rrrrrrrrrrr}
  & j_1 & j_1+1 & j_2-1 & j_2 & j_3 & j_3 + 1 & j_5 & j_5+1 & j_4-1 &
   j_4 \\ \cline{2-11} 
   i=1 & \multicolumn{1}{|r}{1} & 0^* & 0\hspace{0.15cm} & -1 & -1 & 0^*
   & 0^* & 0\hspace{0.15cm} & 0\hspace{0.15cm} & \multicolumn{1}{r|}{1}\\ 
 i=2 & \multicolumn{1}{|r}{-1} & 0\hspace{0.15cm} & 0^* & 1
   & 1 & 0\hspace{0.15cm} & 0\hspace{0.15cm} & 0^* & 0^* & 
   \multicolumn{1}{r|}{-1}\\ 
    \cline{2-11}
 \end{array}
\]
By applying 
\[
 z' := - \bm{e}_{j_1} + \bm{e}_{j_1+1} + \bm{e}_{j_5} - \bm{e}_{j_5+1}
\]
and
\[
 z'' := - \bm{e}_{j_5} + \bm{e}_{j_5+1} + \bm{e}_{j_4-1} - \bm{e}_{j_4}
\]
to $z$ in this order, we obtain
\[
 z + z' + z'' = 
 \bm{e}_{j_1+1} - \bm{e}_{j_2} - \bm{e}_{j_3} - \bm{e}_{j_4-1}
\]
as in the following picture. 

\begin{align*}
 & \begin{array}{rrrrrrrrrrr}
  & j_1 & j_1+1 & j_2-1 & j_2 & j_3 & j_3 + 1 & j_5 & j_5+1 & j_4-1 &
   j_4 \\ \cline{2-11} 
   i=1 & \multicolumn{1}{|r}{1} & 0^* & 0\hspace{0.15cm} & -1 & -1 & 0^*
   & 0^* & 0\hspace{0.15cm} & 0\hspace{0.15cm} & \multicolumn{1}{r|}{1}\\ 
 i=2 & \multicolumn{1}{|r}{-1} & 0\hspace{0.15cm} & 0^* & 1
   & 1 & 0\hspace{0.15cm} & 0\hspace{0.15cm} & 0^* & 0^* & 
   \multicolumn{1}{r|}{-1}\\ 
    \cline{2-11}
 \end{array}\\
 & \quad + 
 \begin{array}{rrrrrrrrrrr}
  & j_1 & j_1+1 & j_2-1 & j_2 & j_3 & j_3 + 1 & j_5 & j_5+1 & j_4-1 &
   j_4 \\ \cline{2-11} 
   i=1 & \multicolumn{1}{|r}{-1} & 1 & 0\hspace{0.15cm} &  0 &  0 & 0
   & 1 & -1 & 0 & \multicolumn{1}{r|}{0}\\ 
   i=2 & \multicolumn{1}{|r}{1} & -1 & 0\hspace{0.15cm} &  0 &  0 & 0
   & -1 & 1 & 0 & \multicolumn{1}{r|}{0}\\ 
    \cline{2-11}
 \end{array}\\
 & \quad = 
 \begin{array}{rrrrrrrrrrr}
  & j_1 & j_1+1 & j_2-1 & j_2 & j_3 & j_3 + 1 & j_5 & j_5+1 & j_4-1 &
   j_4 \\ \cline{2-11} 
   i=1 & \multicolumn{1}{|r}{0} & 1 & 0\hspace{0.15cm} &  -1 &  1 & 0
   & 1 & -1 & 0 & \multicolumn{1}{r|}{-1}\\ 
   i=2 & \multicolumn{1}{|r}{0} & -1 & 0\hspace{0.15cm} &  1 &  -1 & 0
   & -1 & 1 & 0 & \multicolumn{1}{r|}{1}\\ 
    \cline{2-11}
 \end{array}
\end{align*}

\begin{align*}
 & \begin{array}{rrrrrrrrrrr}
  & j_1 & j_1+1 & j_2-1 & j_2 & j_3 & j_3 + 1 & j_5 & j_5+1 & j_4-1 &
   j_4 \\ \cline{2-11} 
   i=1 & \multicolumn{1}{|r}{0} & 1 & 0\hspace{0.15cm} &  -1 &  1 & 0
   & 1 & -1 & 0 & \multicolumn{1}{r|}{-1}\\ 
   i=2 & \multicolumn{1}{|r}{0} & -1 & 0\hspace{0.15cm} &  1 &  -1 & 0
   & -1 & 1 & 0 & \multicolumn{1}{r|}{1}\\ 
    \cline{2-11}
 \end{array}\\
 & \quad + 
 \begin{array}{rrrrrrrrrrr}
  & j_1 & j_1+1 & j_2-1 & j_2 & j_3 & j_3 + 1 & j_5 & j_5+1 & j_4-1 &
   j_4 \\ \cline{2-11} 
   i=1 & \multicolumn{1}{|r}{0} & 0 & 0\hspace{0.15cm} &  0 &  0 & 0
   & -1 & 1 & 1 & \multicolumn{1}{r|}{-1}\\ 
   i=2 & \multicolumn{1}{|r}{0} & 0 & 0\hspace{0.15cm} &  0 &  0 & 0
   & 1 & -1 & -1 & \multicolumn{1}{r|}{1}\\ 
    \cline{2-11}
 \end{array}\\
 & \quad = 
\begin{array}{rrrrrrrrrrr}
  & j_1 & j_1+1 & j_2-1 & j_2 & j_3 & j_3 + 1 & j_5 & j_5+1 & j_4-1 &
   j_4 \\ \cline{2-11} 
   i=1 & \multicolumn{1}{|r}{0} & 1 & 0\hspace{0.15cm} &  -1 &  1 & 0
   & 0 & 0 & -1 & \multicolumn{1}{r|}{0}\\ 
   i=2 & \multicolumn{1}{|r}{0} & -1 & 0\hspace{0.15cm} &  1 &  -1 & 0
   & 0 & 0 & 1 & \multicolumn{1}{r|}{0}\\ 
    \cline{2-11}
 \end{array}
\end{align*}

From the inductive assumption, we can reduce 
the $L_1$ norm of $z+z'$ by moves in ${\cal B}_0$.
In the case that 
$x_{1,j_1+1}=x_{2,j_2-1}=x_{1,j_3+1}=x_{2,j_4-1}=0$, 
the proof is similar.
\end{proof}

\section{Markov bases for models of Segre product type}
\label{sec:Segre}

In the previous section we considered univariate Poisson regression
and logistic regression.  We now consider generalizing the results to
multiple regression.  In this section we show an explicit form of
Markov basis for multiple Poisson regression.  Therefore an extension
of Proposition  \ref{prop:1} to multiple regression is straightforward.
In contrast,  as we see in the next section, it is difficult to generalize 
the results of univariate logistic regression to multiple logistic regression.

Multiple Poisson regression is a Segre product of univariate Poisson
regressions. Quadratic Gr\"obner bases of Segre products is already 
discussed in \cite{ahot2008}.  However Theorem \ref{thm:segre} below is 
about Markov bases (rather than
Gr\"obner bases) and it is applicable even if the component
configurations do not possess quadratic  Gr\"obner bases.

Consider two configurations $A=(\Ba_1, \dots, \Ba_J)$ and
$B=(\Bb_1, \dots, \Bb_K)$, where $\Ba_j$ and $\Bb_k$ are column vectors.
We assume the homogeneity, i.e., 
there exist weight vectors $w,v$ such that 
$
\langle w, \Ba_j \rangle = 1, \ \forall j, \ 
\langle v, \Bb_k \rangle = 1, \ \forall k.
$
The configuration $A\otimes B$ of the Segre product of $A$ and $B$ is defined as
\[
A\otimes B = \Big(\Ba_j \oplus \Bb_k, \ j=1,\dots,J, k=1,\dots,K\Big),  \qquad
\Ba_j \oplus \Bb_k =\begin{pmatrix} \Ba_j \\ \Bb_k
\end{pmatrix}.
\]
If both $A$ and $B$ are configurations of the form 
(\ref{eq:configuratio-univariate-Poisson})
for the univariate Poisson regression model, then $A\otimes B$ corresponds
to the bivariate Poisson regression model, where $X_{jk}$ is independently distributed
according to Poisson distribution  with mean $\mu_{jk}$, which is modeled as
\[
\log \mu_{jk}= \mu + \alpha j + \beta k, \qquad j=1,\dots,J,\ k=1,\dots,K.
\]

Let $X=(X_{jk})_{j=1,\dots,J, k=1,\dots,K}$ denote a table of observed 
frequencies.
The sufficient statistic for the
Segre product $A\otimes B$ is given by
\[
\sum_j \Ba_j X_{j+}, \quad \sum_k \Bb_k X_{+k}.
\]
Therefore $z=(z_{jk})$ is a move for $A\otimes B$ if and only if
\begin{equation}
\label{eq:move}
0=\sum_j \Ba_j z_{j+}, \quad 0=\sum_k \Bb_k z_{+k}.
\end{equation}

Given Markov bases ${\cal B}_A$ and 
${\cal B}_B$ for $A$ and $B$, respectively, our goal is to 
construct a Markov basis for the Segre product $A\otimes B$.
Denote the elements of ${\cal B}_A$ by $z^A=(z^A_1, \dots, z^A_J)$.
Let $z_j^{A,+}=\max(z_j^A,0)$ be the positive part and
$z_j^{A,-}=\max(-z_j^A,0)$ be the negative part of $z_j^A$.  Let
$\deg z^A=\sum_{j=1}^J z_j^{A,+}=\sum_{j=1}^J z_j^{A,-}$ be the degree of
$z^A$. 
Now 
$z^A$ is uniquely written as
\[
z^A = \sum_{h=1}^{\deg z^A} (\Be_{j_h} - \Be_{j'_h}), 
\]
where $j_1\le \dots \le j_{\deg z^A}$ and $j'_1 \le \dots \le j'_{\deg
z^A}$. 
Let $\Be_{j k}$ denote a $J\times K$ table with $1$ at the cell $(j,k)$ and
0 everywhere else.
Now choose arbitrary $1\le k_1,  \dots,k_{\deg z^A} \le K$ and define
\[
z^A(k_1, \dots, k_{\deg z^A})=
\sum_{h=1}^{\deg z^A} (\Be_{j_h k_h} - \Be_{j'_h k_h}).
\]
We call $z^A(k_1, \dots, k_{\deg z^A})$ a ``distribution'' of $z^A$ by 
coordinates $k_1, \dots, k_{\deg z^A}$.
Note that $k_1, \dots, k_{\deg z^A}$ are not ordered.
Similarly define the distribution 
$z^B(j_1, \dots, j_{\deg z^B})$ of a move $z^B \in {\cal B}_B$.

In addition to these moves we also consider the basic moves $z(j_1, j_2; k_1, k_2)
=\Be_{j_1  k_1}+\Be_{j_2 k_2}-\Be_{j_1 k_2}-\Be_{j_2 k_1}$
of the form
{\footnotesize
$
 \begin{array}{cc}
     +1 & -1\\  -1 & +1
  \end{array} .
$}
We now have the following theorem.

\begin{theorem}
\label{thm:segre}
 The set of basic moves 
 and the set of moves of the form 
 $z^A(k_1, \dots, k_{\deg z^A})$, $1\le k_1,  \dots,k_{\deg z^A} \le K$, 
 $z^A\in {\cal B}_A$, 
 $z^B(j_1, \dots, j_{\deg z^B})$, $1\le  j_1,  \dots, j_{\deg z^B} \le J$, 
 $z^B\in {\cal B}_B$, form a Markov basis for the Segre product $A\otimes B$.
\end{theorem}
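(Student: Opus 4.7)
The plan is to prove connectivity by a three-stage reduction. Given tables $x,y$ in a common $A\otimes B$-fiber, the constraints (\ref{eq:move}) imply that the row-sum vectors $(x_{j+})_j$ and $(y_{j+})_j$ lie in the same $A$-fiber, and symmetrically the column-sum vectors lie in the same $B$-fiber. The strategy is then: (Stage 1) equalize row sums using ${\cal B}_A$-distributions, (Stage 2) equalize column sums using ${\cal B}_B$-distributions, (Stage 3) connect the resulting table to $y$ using basic moves. Every distribution $z^A(k_1,\ldots,k_{\deg z^A})$ has row-sum vector $z^A$ and all column sums zero, so Stage 1 does not disturb column sums; by symmetry Stage 2 preserves row sums; and basic moves preserve both. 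Hence the three stages decouple.

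For Stage 1, since ${\cal B}_A$ is a Markov basis of $A$, there is a sequence $(x_{j+})=w^0,w^1,\ldots,w^L=(y_{j+})$ of nonnegative integer row-sum vectors with consecutive differences $z^A:=w^{l+1}-w^l$ in $\pm{\cal B}_A$. At each step I must lift $z^A$ to a two-dimensional move $z^A(k_1,\ldots,k_{\deg z^A})$ that keeps the current table $x^{(l)}$ nonnegative. Writing $z^A=\sum_h(\Be_{j_h}-\Be_{j'_h})$ in its canonical form, the only nonnegativity constraint arises from the subtractions at cells $(j'_h,k_h)$: for each row $j'$ appearing in the negative part, the multiplicity of the pair $(j'_h,k_h)=(j',k)$ over $h$ must not exceed $x^{(l)}_{j',k}$. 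Since the total such multiplicity in row $j'$ equals $z^{A,-}_{j'}\le w^l_{j'}=\sum_k x^{(l)}_{j',k}$, a greedy column assignment succeeds; additions at $(j_h,k_h)$ impose no constraint.

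Stage 2 is identical with the roles of $A,B$ and of rows and columns interchanged, using ${\cal B}_B$-distributions. After Stages 1 and 2, the transformed $x$ and $y$ share both marginals and lie in a common fiber of the two-way independence model, for which it is classical that the basic moves $z(j_1,j_2;k_1,k_2)$ form a Markov basis; applying Stage 3 then completes the connectivity argument. Prior to connectivity one verifies that each proposed generator is actually a move of $A\otimes B$: basic moves have zero row and column sums; $z^A(k_1,\ldots)$ has row-sum vector $z^A\in\ker A$ and zero column sums, so both equalities of (\ref{eq:move}) hold; analogously for ${\cal B}_B$-distributions.

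The main obstacle will be the lifting step in Stage 1 (and its mirror in Stage 2): one must show that a valid column assignment $(k_h)$ exists that respects the fixed pairing of positive and negative terms $\Be_{j_h,k_h}-\Be_{j'_h,k_h}$ inside the canonical decomposition of $z^A$. I expect this to reduce cleanly to the pigeonhole/greedy argument sketched above, since the only binding constraints come from the negative part and are bounded row-by-row by the current row totals $w^l_{j'}$; nonetheless, care is needed to organize the allocation explicitly for moves of higher degree, which is where the bulk of the technical content of the proof should lie.
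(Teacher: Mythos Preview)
Your proposal is correct and follows essentially the same three-stage reduction as the paper's proof: first equalize row sums via distributed ${\cal B}_A$-moves (which leave column sums invariant), then equalize column sums via distributed ${\cal B}_B$-moves, and finally connect by basic moves of the two-way independence model. Your lifting argument for Stage~1---that the negative part $z^{A,-}_{j'}$ in each row $j'$ is bounded by the current row total $w^l_{j'}=\sum_k x^{(l)}_{j',k}$, so a greedy column assignment for the subtractions succeeds---is exactly the paper's argument; your worry about the fixed pairing $(j_h,j'_h)$ is unnecessary, since the positive and negative index sets are disjoint and only the subtractions constrain nonnegativity.
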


A proof of this theorem is given in Appendix.
In Theorem \ref{thm:segre}
we have considered Segre product of two configurations.
By a recursive argument,  a Markov basis for
the Segre product of
arbitrary number of configurations $A_1 \otimes\dots\otimes A_m$
is given as follows.  Let ${\cal B}_{A_j}$ be a Markov basis 
for the configuration $A_j$, $j=1,\dots,m$.  Write $[J]=\{1,\dots,J\}$ and
let 
\[
\bar {\cal J}_j=[J_1] \times \dots\times [J_{j-1}] \times [J_{j+1}]\times \dots
\times [J_m].
\]
Let  $z^{A_j} \in {\cal B}_{A_j}$ and let
$\Bk_1, \dots,\Bk_{\deg z^{A_j}}\in \bar {\cal J}_j$.  Now define
\[
z^{A_j}(\Bk_1, \dots, \Bk_{\deg z^{A_j}})=
\sum_{h=1}^{\deg z^{A_j}} (\Be_{j_h \Bk_h} - \Be_{j'_h \Bk_h}),
\]
where $\Be_{j,\Bk}$ is an $m$-way table with 1 at
the cell $(j,\Bk)$ and 0 everywhere else.  Then we have the following
corollary to Theorem \ref{thm:segre}.

\begin{corollary}
The set of 
 square-free  degree two moves for the complete independence model of 
$J_1 \times\dots \times J_m$ contingency tables 
and the set of moves of the form 
$z^{A_j}(\Bk_1, \dots, \Bk_{\deg z^A})$, 
$\Bk_1, \dots, \Bk_{\deg z^A}\in \bar {\cal J}_j$, $j=1,\dots,m$, 
form a Markov basis for the Segre product $A_1\otimes \dots \otimes A_m$.
\end{corollary}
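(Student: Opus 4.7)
My plan is to prove the corollary by induction on $m$, using Theorem \ref{thm:segre} as both the base case and the inductive lever. The base case $m=2$ coincides exactly with Theorem \ref{thm:segre}, since the basic $2\times 2$ moves $\Be_{j_1 k_1}+\Be_{j_2 k_2}-\Be_{j_1 k_2}-\Be_{j_2 k_1}$ are precisely the square-free degree two moves for the complete independence model on $J_1\times J_2$ tables.

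For the inductive step, I would write
\[
A_1\otimes \cdots\otimes A_m \;=\; (A_1\otimes \cdots\otimes A_{m-1})\otimes A_m
\]
and apply Theorem \ref{thm:segre} with $A=A_1\otimes \cdots\otimes A_{m-1}$ and $B=A_m$. By the inductive hypothesis, ${\cal B}_A$ consists of (a) square-free degree two moves for the complete independence model on $J_1\times\cdots\times J_{m-1}$ tables, and (b) moves $z^{A_j}(\Bk^{(m-1)}_1,\ldots,\Bk^{(m-1)}_{\deg z^{A_j}})$ for $j<m$, where each $\Bk^{(m-1)}_h$ lies in the corresponding $\bar{\cal J}_j$ for the $(m-1)$-fold product. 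Substituting this description into Theorem \ref{thm:segre} yields three kinds of moves for $A_1\otimes\cdots\otimes A_m$: (i) basic swaps along the $m$-th coordinate, (ii) distributions of moves from ${\cal B}_A$ by indices in $[J_m]$, and (iii) distributions of moves from ${\cal B}_{A_m}$ by indices in $[J_1]\times\cdots\times [J_{m-1}]$, the last of which already matches the $j=m$ entries of the corollary.

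To match the rest with the corollary I would verify two claims. First, distributing an already-distributed move $z^{A_j}(\Bk^{(m-1)}_1,\ldots,\Bk^{(m-1)}_{\deg z^{A_j}})$ further by coordinates $(k_1,\ldots,k_{\deg z^{A_j}})\in [J_m]^{\deg z^{A_j}}$ produces precisely $z^{A_j}(\Bk_1,\ldots,\Bk_{\deg z^{A_j}})$ with $\Bk_h=(\Bk^{(m-1)}_h,k_h)\in \bar{\cal J}_j$ for the $m$-fold product; this is a direct unfolding of definitions, provided the canonical pairing of positive and negative parts used in the distribution construction is preserved under iteration. Second, the basic $m$-coordinate moves from (i), together with the distributions in (ii) of $(m-1)$-way complete-independence moves, generate the same lattice as the full collection of square-free degree two complete-independence moves on $m$-way tables.

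The hard part will be this second claim. Given such a move $\Be_{\Bi}+\Be_{\Bj}-\Be_{\Bi'}-\Be_{\Bj'}$ with $\{\Bi_r,\Bj_r\}=\{\Bi'_r,\Bj'_r\}$ for each $r\le m$, I would split into cases on whether the $m$-th coordinates are swapped: if $\Bi_m=\Bi'_m$ and $\Bj_m=\Bj'_m$, the move is immediately a distribution of the $(m-1)$-way complete-independence move obtained by deleting coordinate $m$; if instead $\Bi_m=\Bj'_m\neq\Bi'_m=\Bj_m$, I would add a carefully chosen basic $m$-coordinate move to cancel the last-coordinate swap and reduce to the previous case. The main burden is purely bookkeeping: keeping the sorted-index pairings consistent through the iterated distribution construction so that the claim that distribution-of-distribution equals a single distribution is literally true rather than just true up to relabeling.
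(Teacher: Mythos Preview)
Your inductive strategy is the natural reading of the paper's one-line ``by a recursive argument'' (the paper supplies no further details), and the containments you need for (i), (ii-a), and (iii) are indeed immediate. However, your Claim~1---that distributing an already-distributed move yields a direct distribution in the sense of the corollary---is not merely bookkeeping; the proviso you flag actually fails. The pairing in the definition of $z^A(k_1,\ldots,k_{\deg z^A})$ matches the $h$-th smallest positive index with the $h$-th smallest negative index, and an intermediate distribution can scramble this sorted pairing. Concretely, take $z^{A_1}=\Be_1+\Be_3-2\Be_2$, distribute first with second-coordinate indices $(a_1,a_2)=(2,1)$ to obtain $\Be_{(1,2)}+\Be_{(3,1)}-\Be_{(2,2)}-\Be_{(2,1)}$; after re-sorting (lexicographically) the negative part becomes $(2,1),(2,2)$, so distributing by third-coordinate indices $(b_1,b_2)$ gives
\[
\Be_{1,2,b_1}+\Be_{3,1,b_2}-\Be_{2,1,b_1}-\Be_{2,2,b_2}.
\]
This is \emph{not} of the form $\Be_{1,\Bk_1}+\Be_{3,\Bk_2}-\Be_{2,\Bk_1}-\Be_{2,\Bk_2}$, and it is not a square-free independence move either (its first one-way marginal is $\Be_1+\Be_3-2\Be_2\neq 0$). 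Hence the Markov basis produced by literal recursion is not contained in the set named in the corollary, and your argument does not close.

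Your Claim~2 is also overworked: since a superset of a Markov basis is again a Markov basis, you only need that the moves in (i) and (ii-a) \emph{lie in} the set of $m$-way square-free degree-two independence moves, which is immediate---no lattice-generation statement is required. The cleanest repair, and probably what the paper has in mind, is to bypass the literal induction and rerun the greedy argument from the proof of Theorem~\ref{thm:segre} in $m$ coordinates at once: first use direct distributions $z^{A_1}(\Bk_1,\ldots)$ to equalize the first one-way marginals of $x$ and $y$ (each such move leaves all other one-way marginals untouched, by exactly the argument in the appendix), then equalize the second marginals with direct distributions of ${\cal B}_{A_2}$, and so on; once all one-way marginals agree, finish with the known Markov basis of square-free degree-two moves for complete independence. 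This uses only the moves listed in the corollary and avoids the pairing issue entirely.
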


Minimality of the Markov basis constructed in Theorem \ref{thm:segre}
is not clear at the present.  However the maximum degree of moves in
the Markov basis for $A_1 \otimes \dots \otimes A_m$ is bounded by the
maximum degree of moves in ${\cal B}_{A_1}, \dots {\cal B}_{A_m}$.

\section{Connectivity of fibers of positive marginals in bivariate
  logistic regression}
\label{sec:connectivity}

In this section we consider the extension of the results in univariate
logistic regression model to bivariate logistic regression model.  
Let $\{1,\dots, J\}$ and $\{1,\dots, K\}$ 
be the sets levels of two covariates. 
Let $X_{1jk}$ and $X_{2jk}$, $j=1,\ldots,J$, $k=1,\ldots,K$, 
be the numbers of successes and failures, respectively, for level $(j,k)$.
The probability for success $p_{1jk}$ is modeled as
\begin{equation}
 \label{model:bivariate}
 \mathrm{logit}(p_{1jk}) = 
 \log \left(
 \frac{p_{1jk}}{1-p_{1jk}}
 \right)
 =
 \mu + \alpha j + \beta k, 
\end{equation}
\[
 j = 1, \ldots, J, \quad k = 1, \ldots, K.
\]
The sufficient statistics for this model is 
$X_{1++}$, $\sum_{j=1}^J j X_{1j+}$,
$\sum_{k=1}^K k X_{1+k}$, $X_{+jk}$, $\forall j,k$. 
Hence moves $Z = (z_{ijk})$ for the model satisfy 
\[
z_{1++}=0, \quad \sum_{j=1}^J j z_{1j+}=0, \quad 
\sum_{k=1}^K k z_{1+k} = 0, \quad z_{+jk} = 0, \ \forall j,k.
\]
Let 
\[
B = \begin{pmatrix} 1& 1 & \dots & 1 \\
                    1 & 2 & \dots & K \\
                  \end{pmatrix}.
\]
and let $A$ be defined as in (\ref{def:A}). 
Then the configuration for the bivariate logistic regression model is 
the Lawrence lifting of Segre product $\Lambda(A \otimes B)$.
Here we consider a set of moves which connects every fiber satisfying
$X_{+jk} > 0$, $\forall j,k$.  

\begin{definition}
 \label{def:bivariate}
 Let $\bm{e}_{jk}=(e_{ijk})$ be redefined as an integer array 
 with $1$ at the cell $(1jk)$, $-1$ at the cell $(2jk)$ 
 and $0$ everywhere else.  
 Define ${\cal B}_{\Lambda(A \otimes B)}$ 
 as the set of moves $z=(z_{ijk})$ satisfying the following conditions, 
 \begin{enumerate}
  \item $z = \bm{e}_{j_1 k_1} - \bm{e}_{j_2 k_2} - \bm{e}_{j_3 k_3}
        + \bm{e}_{j_4 k_4}${\rm;}
  \item $(j_1,k_1) - (j_2,k_2) = (j_3,k_3) - (j_4,k_4)$.
 \end{enumerate}
\end{definition}

${\cal B}_{\Lambda(A \otimes B)}$ is an extension of ${\cal
B}_{\Lambda(A)}$ in Proposition \ref{prop:uni-1} to the bivariate 
model (\ref{model:bivariate}). 
We note that the $(i=1)$-slice of a moves $(z_{1jk})$ in 
${\cal B}_{\Lambda(A \otimes B)}$ is a move of the Markov basis defined
in Theorem \ref{thm:segre}. 
Now we present the main theorem of this paper.

\begin{theorem}
 \label{theorem:bivariate}
 ${\cal B}_{\Lambda(A \otimes B)}$ connects every fiber satisfying
 $X_{+jk} > 0$, $\forall j,k$.  
\end{theorem}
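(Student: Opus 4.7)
The plan is to mirror the distance-reducing argument of Proposition \ref{prop:uni-1}. Let $x\ne y$ be two tables in the same fiber with $X_{+jk}>0$ for all $j,k$, and set $z:=x-y$. Since $z_{+jk}=0$, the two slices satisfy $z_2=-z_1$, and the sufficient-statistic equations $z_{1++}=0$, $\sum_j j\,z_{1j+}=0$, $\sum_k k\,z_{1+k}=0$ show that the nonzero slice $z_1$ is itself a move for the bivariate Poisson regression $A\otimes B$ of Section \ref{sec:Segre}.

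The heart of the proof is a bivariate analog of Lemma \ref{lemma:appendix-1}: for any such nonzero $z_1$ there exist cells $(j_\ell,k_\ell)$, $\ell=1,\dots,4$, with $(j_1,k_1)-(j_2,k_2)=(j_3,k_3)-(j_4,k_4)$ and sign pattern $z_{1j_1k_1}>0$, $z_{1j_2k_2}<0$, $z_{1j_3k_3}<0$, $z_{1j_4k_4}>0$. Granted this, the move $m:=\Be_{j_1k_1}-\Be_{j_2k_2}-\Be_{j_3k_3}+\Be_{j_4k_4}$ lies in $\mathcal{B}_{\Lambda(A\otimes B)}$ by Definition \ref{def:bivariate}, and the sign pattern itself forces $y_{1j_2k_2},\,y_{1j_3k_3}\ge 1$ and $y_{2j_1k_1},\,y_{2j_4k_4}\ge 1$, so $y+m$ is nonnegative in both slices. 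A direct cell-by-cell computation then gives $\|x-(y+m)\|_1=\|z\|_1-8$, and iteration transports $y$ to $x$ using only moves in $\mathcal{B}_{\Lambda(A\otimes B)}$.

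To establish the structural claim I would apply the Poisson version of Lemma \ref{lemma:appendix-1} (essentially the argument used inside the proof of Proposition \ref{prop:1}) to the row marginal $\tilde z^{\text{row}}_j:=z_{1j+}$, which is itself a univariate Poisson move: when $\tilde z^{\text{row}}\not\equiv 0$ this produces $j_1<j_2\le j_3<j_4$ with $j_2-j_1=j_4-j_3$ and the required row signs, and within each such row one can select a cell carrying the matching sign of $z_1$. The main obstacle is then to coordinate the column indices so that $k_1-k_2=k_3-k_4$ holds simultaneously. Unlike the univariate setting, where the single moment equation makes the analogous condition essentially automatic, the bivariate parallelogram condition couples row and column choices nontrivially; to cope with this I anticipate exploiting the column moment $\sum_k k\,z_{1+k}=0$ together with auxiliary basic moves $\Be_{jk}-\Be_{jk'}-\Be_{j'k}+\Be_{j'k'}\in\mathcal{B}_{\Lambda(A\otimes B)}$ to permute columns within a row. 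If a direct choice fails, Theorem \ref{thm:segre} provides a decomposition of $z_1$ into parallelogram-shaped Markov basis moves of $A\otimes B$ (each already in $\mathcal{B}_{\Lambda(A\otimes B)}$), from which a sign-alignment argument should single out one whose positive and negative supports lie inside those of $z_1$; degenerate parallelograms such as $(j_2,k_2)=(j_3,k_3)$ permitted by Definition \ref{def:bivariate} provide additional flexibility. The case $\tilde z^{\text{row}}\equiv 0$ is handled symmetrically by the column marginal, or by running the univariate argument inside a single row where $z_1$ restricts to a nonzero univariate Poisson move.
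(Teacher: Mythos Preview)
Your proposal has a genuine gap at exactly the point you yourself flag as ``the main obstacle.'' The structural claim you call a ``bivariate analog of Lemma~\ref{lemma:appendix-1}''---that for every nonzero Poisson move $z_1$ one can find four cells with the exact parallelogram relation $(j_1,k_1)-(j_2,k_2)=(j_3,k_3)-(j_4,k_4)$ \emph{and} the sign pattern $+,-,-,+$ inside the support of $z_1$---is not established, and the paper's proof makes clear that no such direct choice is available in general. Already in the univariate proof of Proposition~\ref{prop:uni-1} the fourth cell $j_5:=j_4-(j_2-j_1)$ need not satisfy $z_{1j_5}<0$; the move is applicable only because the positive-marginal hypothesis guarantees $x_{+j_5}>0$, and the distance drops by less than eight when $z_{1j_5}\ge 0$. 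In two dimensions this obstruction is much worse: there are sign configurations (the paper's Cases~2--5) where no parallelogram in $\mathcal{B}_{\Lambda(A\otimes B)}$ has all four corners in the support of $z_1$ with the required signs, and a purely norm-reducing step is impossible.

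The paper handles this by abandoning strict norm reduction. It introduces a separation lemma (Lemma~\ref{ref:separation}) to dispose of the ``monotone'' case, and in the remaining cases performs an extensive case analysis in which moves are applied that keep $\|z\|_1$ \emph{unchanged} while decreasing an auxiliary parameter (such as $j_4-j_3$ or $(k_2-k_1)+(j_4-j_3)$), with induction on that parameter. These intermediate moves pivot through cells $(j,k)$ where $z_{1jk}=0$ but $x_{1jk}>0$ or $x_{2jk}>0$, which is precisely where the positive-marginal hypothesis is consumed. Your suggested fixes---permuting columns with basic moves, or extracting a sign-aligned parallelogram from the Theorem~\ref{thm:segre} decomposition---do not address this: basic moves are themselves parallelograms in $\mathcal{B}_{\Lambda(A\otimes B)}$ and face the same applicability issue, and a decomposition of $z_1$ into parallelogram moves carries no guarantee that any summand has its positive part inside $z_1^+$ and its negative part inside $z_1^-$. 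To complete the argument you would need either to prove the structural claim outright (which appears false) or to replace the single norm-reducing step by an inductive scheme of the kind the paper uses.
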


A proof of this theorem is given in Appendix.
%
We give some examples of moves in ${\cal B}_{\Lambda(A \otimes B)}$.

\medskip

\begin{tabular}{lll}
  {\rm (1)} $k_1 = \cdots = k_4$ & 
  {\rm (2)} $k_1 = \cdots = k_4$ and $j_2 = j_3$ & 
  {\rm (3)} $k_1 = k_2$\  $(k_3 = k_4)$ \\
  \begin{tabular}{crrrr}
   & $j_1$ & $j_2$ & $j_3$ & $j_4$\\ \cline{2-5}
   $k_1$ & 
       \multicolumn{1}{|c}{$1$} & $-1$ & $-1$ & 
                   \multicolumn{1}{c|}{$1$} \\ \cline{2-5}
  \end{tabular}&
        \begin{tabular}{crrr}
         & $j_1$ & $j_2$ & $j_4$\\ \cline{2-4}
         $k_1$ & \multicolumn{1}{|c}{$1$} & $-2$ & 
                     \multicolumn{1}{c|}{$1$} \\ \cline{2-4}
        \end{tabular} & 
        \begin{tabular}{crrrr}
         & $j_1$ & $j_2$ & $j_3$ & $j_4$\\ \cline{2-5}
         $k_1$ & \multicolumn{1}{|c}{$1$} 
             & $-1$ & $0$ & 
                         \multicolumn{1}{c|}{$0$} \\
         $k_3$ & \multicolumn{1}{|c}{$0$} &  $0$ & $-1$ & 
                         \multicolumn{1}{c|}{$1$} \\ \cline{2-5}
        \end{tabular}\\
  \\
  {\rm (4)} $k_1 = k_2$ and $j_2 = j_3$ & 
  {\rm (5)} $(j_2,k_2)=(j_3,k_3)$ & 
  {\rm (6)} $k_1=k_4$ and $j_2=j_3$\\
        \begin{tabular}{crrr}
         & $j_1$ & $j_2$ & $j_4$\\ \cline{2-4}
         $k_1$ & \multicolumn{1}{|c}{$1$} & $-1$ & \multicolumn{1}{c|}{$0$} \\
         $k_3$ & \multicolumn{1}{|c}{$0$} & $-1$ & 
                     \multicolumn{1}{c|}{$1$}\\ \cline{2-4}
        \end{tabular} &
        \begin{tabular}{crrr}
         & $j_1$ & $j_2$ & $j_4$\\ \cline{2-4}
         $k_1$ & \multicolumn{1}{|c}{$1$} &  $0$ & \multicolumn{1}{c|}{$0$} \\
         $k_2$ & \multicolumn{1}{|c}{$0$} & $-2$ & \multicolumn{1}{c|}{$0$}\\ 
         $k_4$ & \multicolumn{1}{|c}{$0$} &  $0$ & \multicolumn{1}{c|}{$1$}\\ 
         \cline{2-4}
        \end{tabular} &
        \begin{tabular}{crrr}
         & $j_1$ & $j_2$ & $j_4$\\ \cline{2-4}
         $k_2$ & \multicolumn{1}{|c}{$0$} & $-1$ & \multicolumn{1}{c|}{$0$} \\
         $k_1$ & \multicolumn{1}{|c}{$1$} & $0$ & \multicolumn{1}{c|}{$1$}\\ 
         $k_3$ & \multicolumn{1}{|c}{$0$} & $-1$ & \multicolumn{1}{c|}{$0$}\\
         \cline{2-4} 
        \end{tabular}
 \end{tabular}

\section{Numerical examples}
\label{sec:examples}
\subsection{Data on coronary heart disease incidence}
\begin{table}[htbp]
 \caption{Data on coronary heart disease incidence}
 \begin{tabular}{ccccccccc} \hline
  & & \multicolumn{7}{c}{Serum Cholesterol (mg/100ml)}\\ \cline{3-9}
  & Blood & 1 & 2 & 3 & 4 & 5 & 6 & 7 \\ \cline{3-9}
  & Pressure & $<$ 200 & 200-209 & 210-219 & 220-244 & 245-259 
  & 260-284 & $>$ 284\\ \hline
  1 & $<$ 117 & 2/53 & 0/21 & 0/15 & 0/20 & 0/14 & 1/22 & 0/11\\
  2 & 117-126 & 0/66 & 2/27 & 1/25 & 8/69 & 0/24 & 5/22 & 1/19\\
  3 & 127-136 & 2/59 & 0/34 & 2/21 & 2/83 & 0/33 & 2/26 & 4/28\\
  4 & 137-146 & 1/65 & 0/19 & 0/26 & 6/81 & 3/23 & 2/34 & 4/23\\
  5 & 147-156 & 2/37 & 0/16 &  0/6 & 3/29 & 2/19 & 4/16 & 1/16\\
  6 & 157-166 & 1/13 & 0/10 & 0/11 & 1/15 & 0/11 & 2/13 & 4/12\\
  7 & 167-186 & 3/21 & 0/5 & 0/11 & 2/27 & 2/5 & 6/16 & 3/14\\
  8 & $>$ 186 & 1/5 & 0/1 & 3/6 & 1/10 & 1/7 & 1/7 & 1/7 \\ \hline
 \end{tabular}\\
 {\em Source} : \cite{cornfield1962}
 \label{table:Agresti}
\end{table}

Table \ref{table:Agresti} refers to coronary heart disease incidence in
Framingham, Massachusetts \citep{cornfield1962, agre:1990}. 
A sample of male residents, aged 40 through 50, were classified
on blood pressure and serum cholesterol concentration.
$2/53$ in the (1,1) cell means that there are 53 cases, of whom 2 exhibited
heart disease. 
We examine the goodness-of-fit of the model (\ref{model:bivariate})
with $J = 7$ and $K = 8$.  
We first test the null hypotheses $\mathrm{H}_\alpha : \alpha = 0$ and
$\mathrm{H}_\beta : \beta = 0$ versus (\ref{model:bivariate})   
using the likelihood ratio statistics $L_{\alpha}$ and $L_{\beta}$.
Then we have $L_{\alpha}=18.09$ and $L_{\beta}=22.56$ and  
the asymptotic p-values are $2.107 \times 10^{-5}$ and $2.037 \times
10^{-6}$, respectively, from the asymptotic distribution $\chi_1^2$. 
We computed the exact distribution of $L_{\alpha}$ and $L_{\beta}$ 
via Monte Carlo Markov chain (MCMC) with the sets of moves ${\cal B}_{\Gamma(A)}$ and ${\cal B}_0$
discussed in Section \ref{sec:preliminaries}. 
See the last paragraph of Section  \ref{sec:discussions} on sampling under
$H_\alpha$ and $H_\beta$. In all experiments in this paper, we sampled
100,000 tables after  
50,000 burn-in steps.
Figure \ref{fig:ag-1dim-a} and \ref{fig:ag-1dim-b} represent 
histograms of $L_{\alpha}$ and $L_{\beta}$. 
The solid lines in the figures represent the density function of 
the asymptotic distribution $\chi_1^2$.
The estimated p-values are $1.0 \times 10^{-6}$ for all cases. 
Therefore both $\mathrm{H}_{\alpha}$ and $\mathrm{H}_{\beta}$ are
rejected. 
We can see from the figures that there are little differences between  
two histograms computed with ${\cal B}_{\Gamma(A)}$ and ${\cal B}_0$.

Next we set (\ref{model:bivariate}) as a null hypothesis and test it
versus the following ANOVA type logit model, 
\begin{equation}
 \label{model:ANOVA}
 \mathrm{H}_1 : 
  \mathrm{logit}(p_{1jk}) = 
  \log \left(
  \frac{p_{1jk}}{1-p_{1jk}}
  \right)
  =
  \mu + \alpha_j + \beta_k, 
\end{equation}
where 
$\sum_{j=1}^J \alpha_j = 0$ and 
$\sum_{k=1}^K \beta_k = 0$ by likelihood ratio statistic $L_0$.
The value of $L_0$ is $13.07587$ and the asymptotic p-value is $0.2884$
from the asymptotic distribution $\chi^2_{11}$.  
We computed the exact distribution of $L_0$ via MCMC with 
${\cal B}_{\Gamma(A \otimes B)}$ defined in Definition
\ref{def:bivariate}. 
As an extension of ${\cal B}_0$ in Theorem \ref{theorem:univariate} to
the bivariate model (\ref{model:bivariate}), 
we define ${\cal B}_0^2$ by the set of moves 
$z = \bm{e}_{j_1 k_1} - \bm{e}_{j_2 k_2} - \bm{e}_{j_3 k_3} +
\bm{e}_{j_4 k_4}$ satisfying 
$(j_1,k_1) - (j_2,k_2) = (j_3,k_3) - (j_4,k_4)$ is either of 
$(\pm 1,0)$, $(0, \pm 1)$, $(\pm 1, \pm 1)$ or $(\pm 1, \mp 1)$.
We also computed the exact distribution of $L_0$ with ${\cal B}_0^2$. 
Figure \ref{fig:ag-a} represents histograms of $L_0$ computed with 
${\cal B}_{\Gamma(A \otimes B)}$ and ${\cal B}_0^2$. 
The estimated p-values are $0.2706$ with ${\cal B}_{\Gamma(A \otimes
B)}$ and $0.2958$ with ${\cal B}_0^2$. Therefore the model
(\ref{model:bivariate}) is accepted.

The p-values estimated with ${\cal B}_{\Gamma(A \otimes B)}$ and 
${\cal B}_0^2$ are close and there are little differences between two
histograms.    
From the results of Theorem \ref{theorem:univariate} and this
numerical experiment, ${\cal B}_0^2$ is also expected to connect every
fiber with positive response variable marginals.
However the theoretical proof of it is not clear at the present and 
is left to our future research. 

\begin{figure}[t]
 \centering
 \begin{tabular}{cc}
 \includegraphics[scale=0.25]{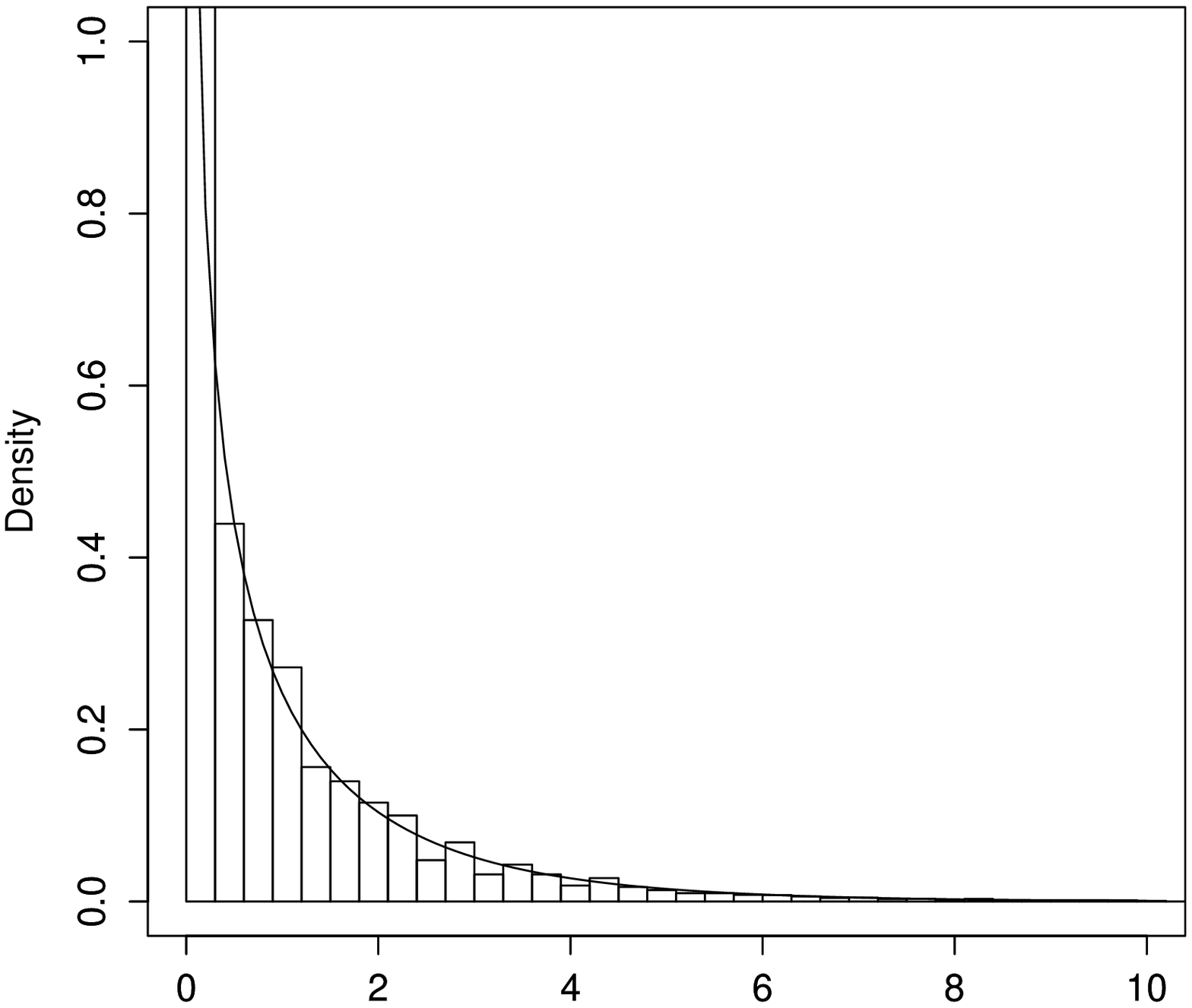} &
  \includegraphics[scale=0.25]{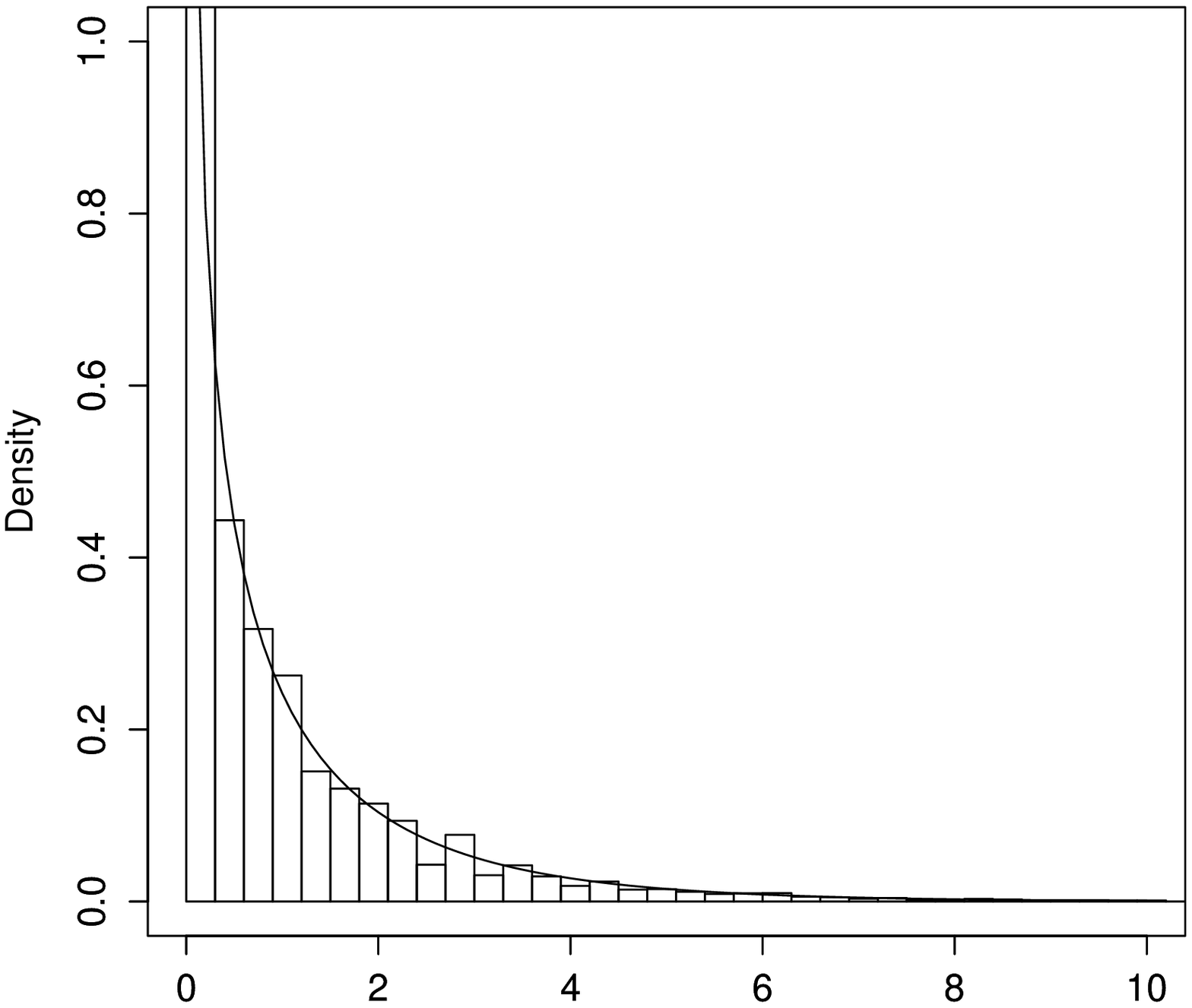}\\
  (a) A histogram with ${\cal B}_{\Gamma(A)}$ & 
  (b) A histogram with ${\cal B}_0$
 \end{tabular}
 \caption{Histograms of $L_{\alpha}$ via MCMC with 
 ${\cal B}_{\Gamma(A)}$ and ${\cal B}_0$} 
 \label{fig:ag-1dim-a}
 \vspace{0,5cm}
 \begin{tabular}{cc}
 \includegraphics[scale=0.25]{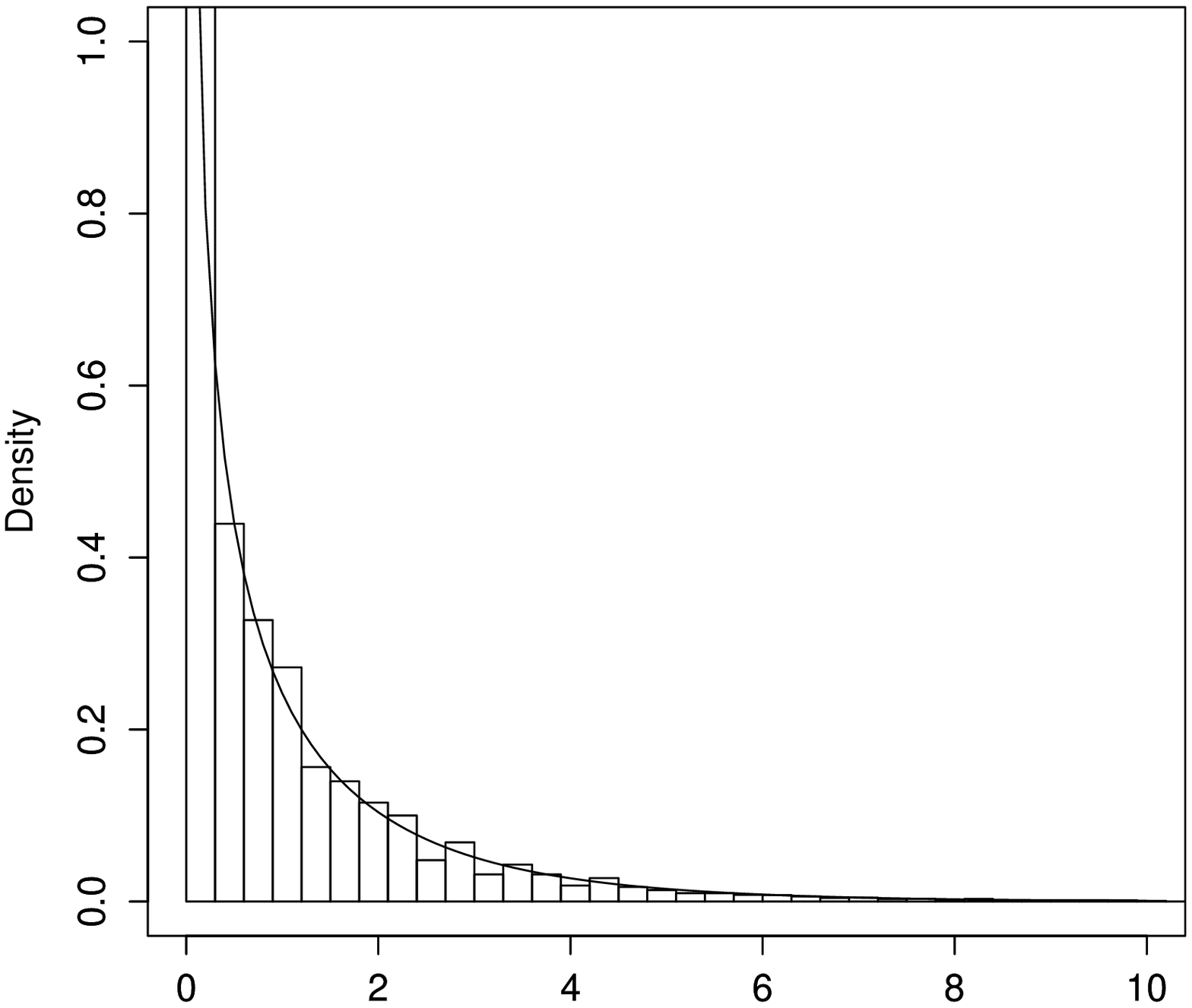} &
  \includegraphics[scale=0.25]{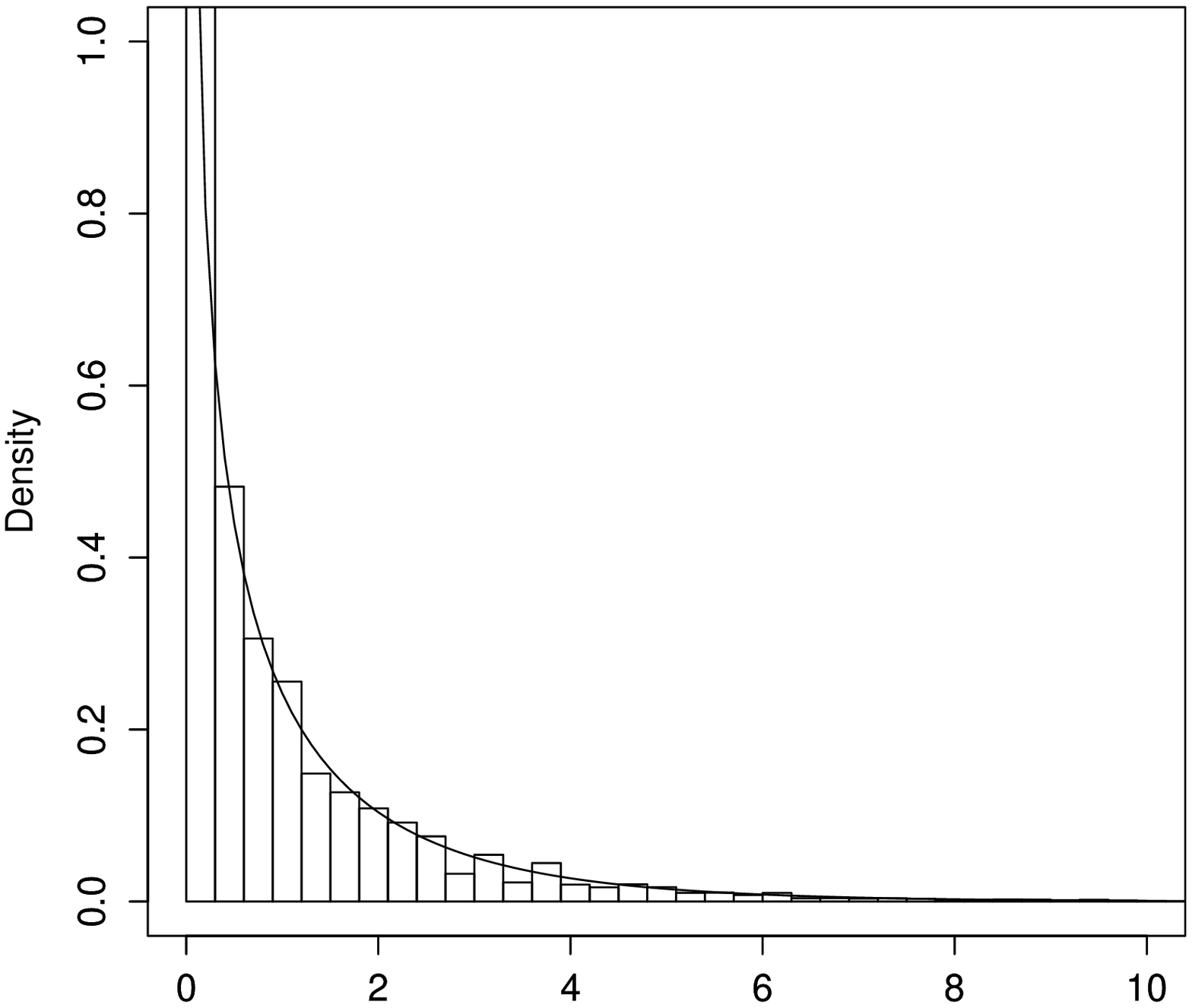}\\
  (a) A histogram with ${\cal B}_{\Gamma(A)}$ & 
  (b) A histogram with ${\cal B}_0$
 \end{tabular}
 \caption{Histograms of $L_{\beta}$ via MCMC with 
 ${\cal B}_{\Gamma(A)}$ and ${\cal B}_0$} 
 \label{fig:ag-1dim-b}
 \vspace{0,5cm}
 \begin{tabular}{cc}
 \includegraphics[scale=0.25]{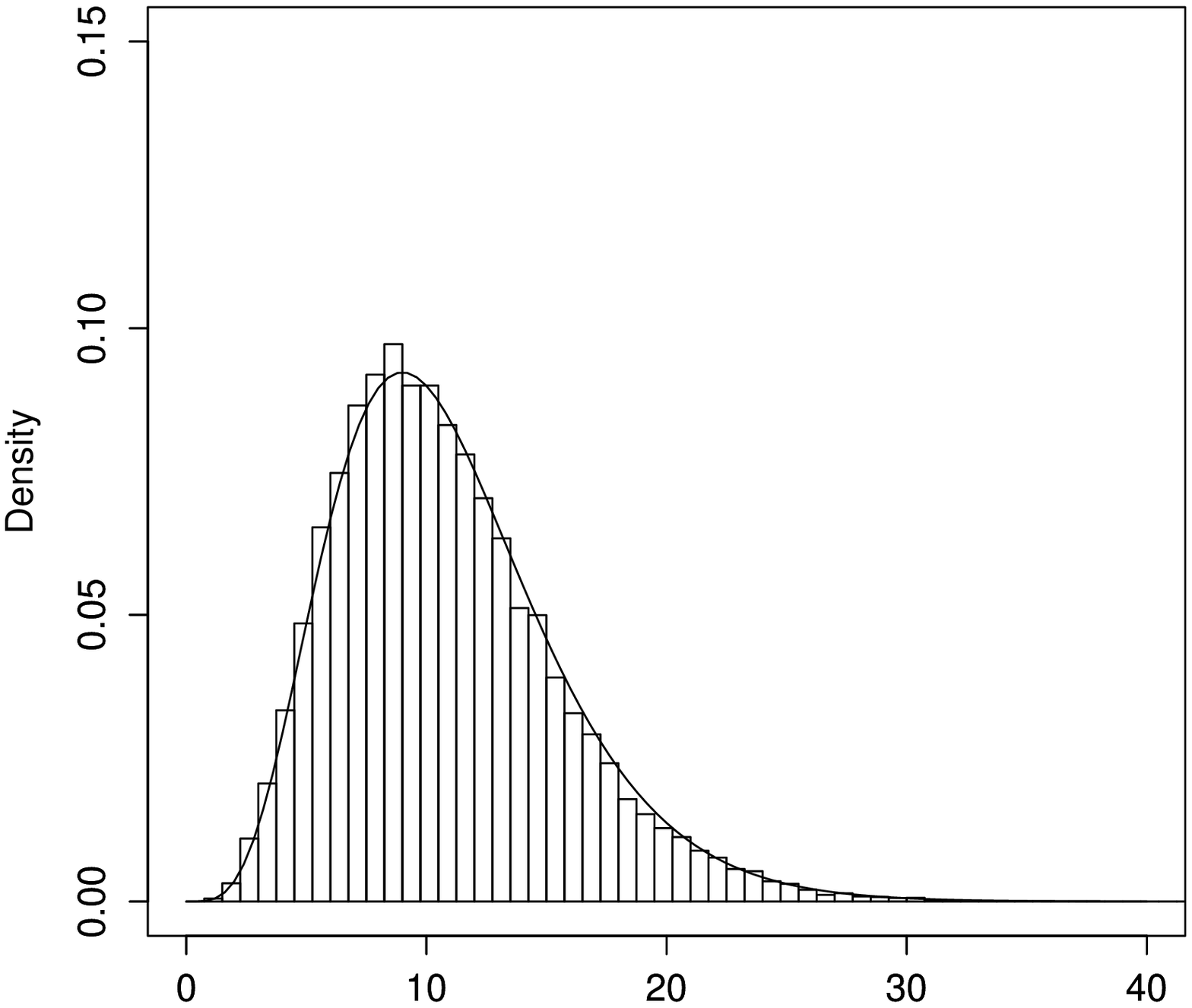} &
  \includegraphics[scale=0.25]{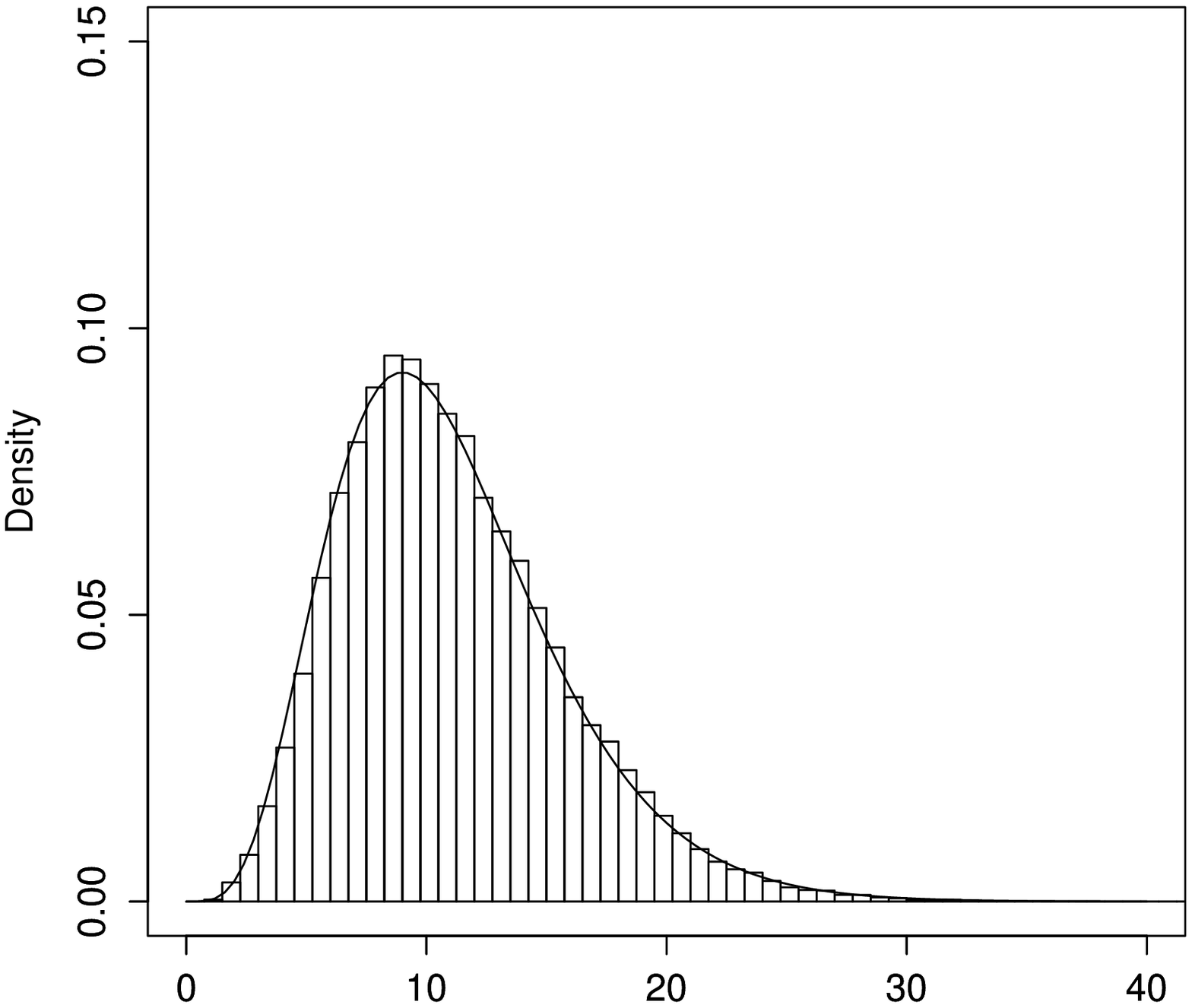}\\
  (a) A histogram with ${\cal B}_{\Lambda(A \otimes B)}$ & 
  (b) A histogram with ${\cal B}_0^2$
 \end{tabular}
 \caption{Histograms of $L_0$ via MCMC with ${\cal B}_{\Lambda(A \otimes
 B)}$ and ${\cal B}_0^2$}  
 \label{fig:ag-a}
\end{figure}

\subsection{Data on occurrence of esophageal cancer}
\begin{table}[htbp]
\centering
 \caption{Data on occurrence of esophageal cancer}
\begin{tabular}{cccccccc} \hline
  & & \multicolumn{6}{c}{Age}\\ \cline{3-8}
  & Alcohol & 1 & 2 & 3 & 4 & 5 & 6 \\ \cline{3-8}
  & Consumption & 25-34 & 35-44 & 45-54 & 55-64 & 65-74 & 75+\\ \hline
  0 & Low & 0/106 & 5/169 & 21/159 & 34/173 & 36/124 & 8/39\\
  1 & High & 1/10 & 4/30 & 25/54 & 42/69 & 19/37 & 5/5\\ \hline
\multicolumn{6}{l}{{\em Source} : \cite{breslow-day1980}}
 \end{tabular}
 \label{table:Christensen}
\end{table}

The second example is from Table 4.16 in \cite{christensen1997} 
(data source is from \cite{breslow-day1980}).
Table \ref{table:Christensen} refers to the occurrence of esophageal
cancer in Frenchmen which were classified on ages and dummy variable on
alcohol consumption. 
We test the goodness-of-fit of the model (\ref{model:bivariate}) 
with $J=6$ and $K=2$ by likelihood ratio statistics $L_0$ via MCMC. 
Then the value of $L_0$ is $20.89$ and the asymptotic p-value is
$0.0003330$ 
from the asymptotic distribution $\chi^2_4$.  
We computed the exact distribution of $L_0$ via MCMC with 
${\cal B}_{\Gamma(A \otimes B)}$ and ${\cal B}_0^2$.
Figure \ref{fig:ch-a} represents the histograms of $L_0$.
The estimated p-values are $0.00011$ with 
${\cal B}_{\Gamma(A \otimes B)}$ and $0.00055$ with 
${\cal B}_0^2$. Therefore the model (\ref{model:bivariate}) is 
rejected at the significance level of 1\%.

\begin{figure}[htbp]
 \centering
 \begin{tabular}{cc}
 \includegraphics[scale=0.25]{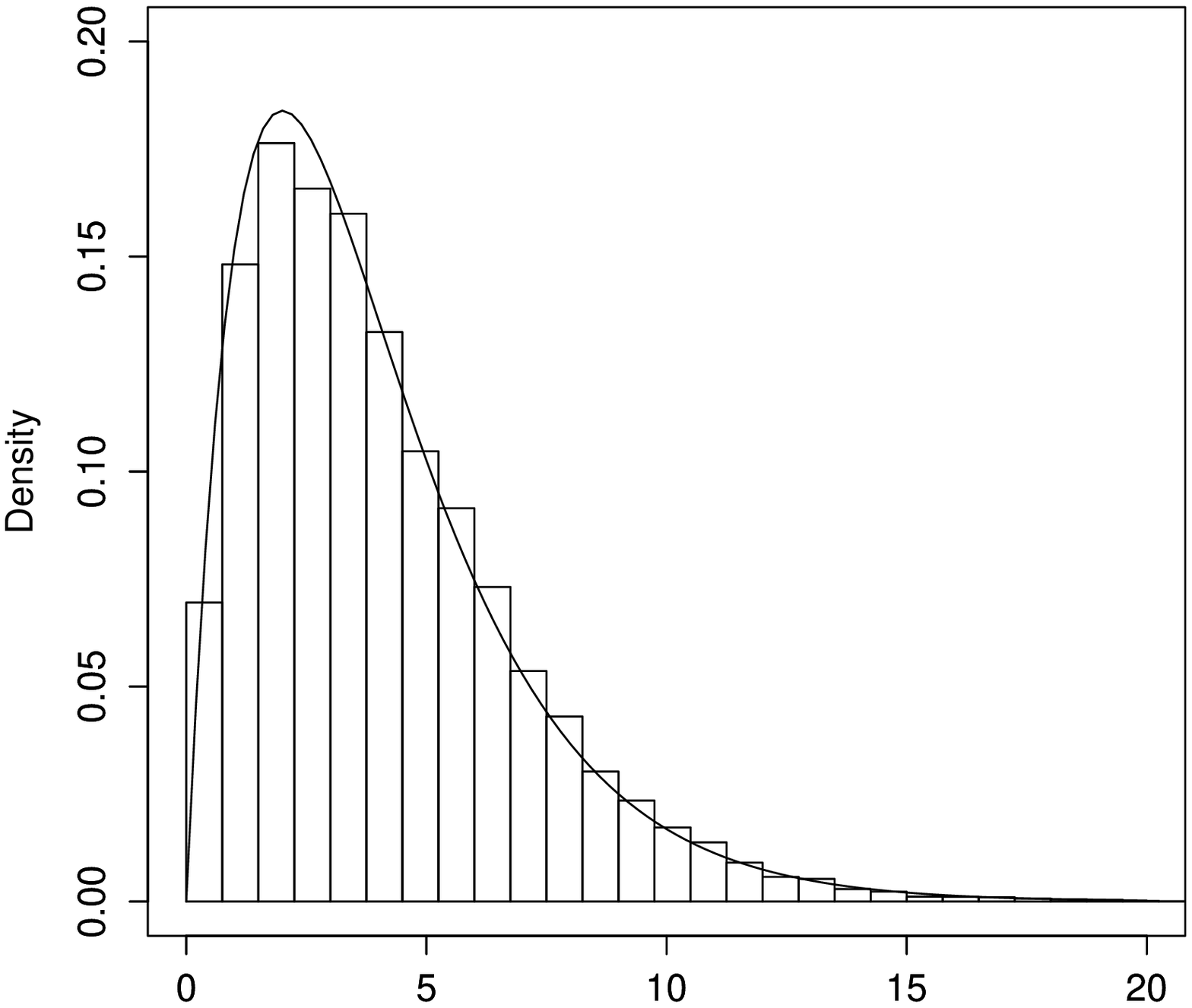} &
  \includegraphics[scale=0.25]{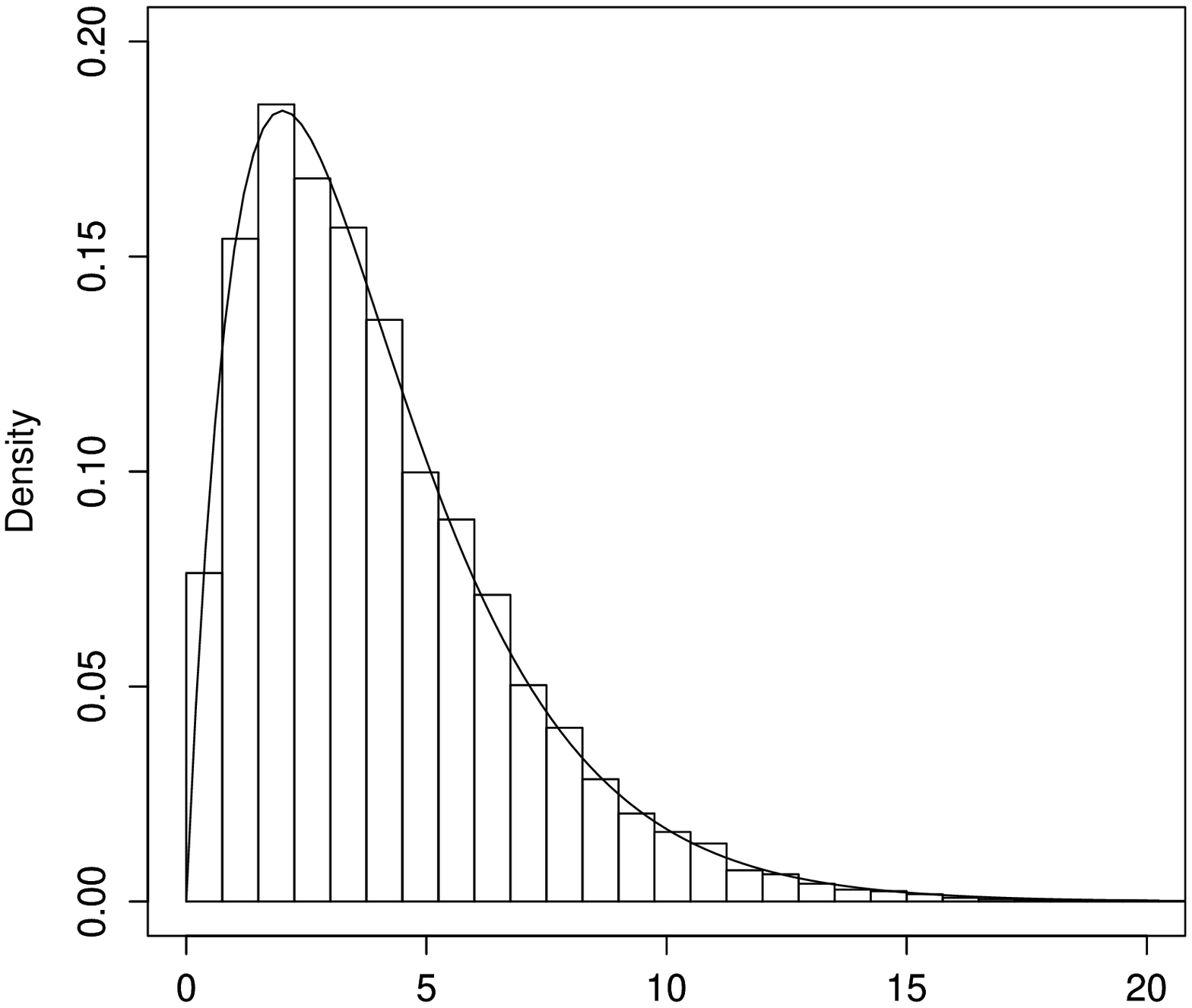}\\
  (a) a histogram with ${\cal B}_{\Lambda(A \otimes B)}$ & 
  (b) a histogram with ${\cal B}_0^2$
 \end{tabular}
 \caption{Histograms of $L_0$ via MCMC with ${\cal B}_{\Lambda(A \otimes
 B)}$ and ${\cal B}_0^2$}  
 \label{fig:ch-a}
\end{figure}

\section{Concluding remarks}
\label{sec:discussions}

In Theorem  \ref{theorem:bivariate}
we showed the connectivity result 
for bivariate logistic regression.  
A natural extension of
Theorem  \ref{theorem:bivariate}
to $m$ covariates
is given as follows.
Let $\Bj=(j_1, \dots, j_m)$ 
denote the combination of $m$ levels and let $\bm{e}_{\Bj}$ denote an
array
with $1$ at the cell $(1,\Bj)$ and $-1$ at the cell $(2,\Bj)$.
Define ${\cal B}_{\Lambda(A_1 \otimes \dots \otimes
  A_m)}$  as  the set of the following moves $z$:
 \begin{enumerate}
  \item $z = \bm{e}_{\Bj_1} - \bm{e}_{\Bj_2} - \bm{e}_{\Bj_3},
        + \bm{e}_{\Bj_4}$
  \item $\Bj_1 - \Bj_2 = \Bj_3 - \Bj_4$ .
 \end{enumerate}
Then we conjecture the following.
\begin{conjecture}
The set of moves ${\cal B}_{\Lambda(A_1 \otimes \dots \otimes
  A_m)}$ connects every fiber with positive response marginals
for the logistic regression with $m$ covariates.
\end{conjecture}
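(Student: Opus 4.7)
The plan is to generalize the proof of Theorem \ref{theorem:bivariate} to $m$ covariates via a distance-reducing argument. Let $x$ and $y$ be two tables in the same fiber with $X_{+\Bj} > 0$ for all $\Bj \in [J_1] \times \cdots \times [J_m]$, and set $z := x - y$. Since $z$ is a move of the Lawrence lifting, $z_{2\Bj} = -z_{1\Bj}$, so $z$ is determined by its $i=1$ slice $u := (z_{1\Bj})$, which is itself a move of the Poisson regression attached to the Segre product $A_1 \otimes \cdots \otimes A_m$. Hence $u_+ = 0$ and, for each covariate $i$ and each fixing of the remaining coordinates, $\sum_{j_i} j_i u_{(\cdots j_i \cdots)} = 0$.

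Step one is to establish a multivariate analogue of Lemma \ref{lemma:appendix-1}: given a nonzero $u$ satisfying these constraints, there exist $\Bj_1, \Bj_2, \Bj_3, \Bj_4$ with $u_{\Bj_1}, u_{\Bj_4} > 0$, $u_{\Bj_2}, u_{\Bj_3} < 0$, and $\Bj_1 - \Bj_2 = \Bj_3 - \Bj_4$ as lattice vectors in $\mathbb{Z}^m$. I would try to construct the required parallelogram one covariate at a time: pick a direction $i$ along which $u$ varies nontrivially on some slice, apply Lemma \ref{lemma:appendix-1} to that slice to extract a pair $\Bj_1, \Bj_2$ differing only along direction $i$, and then search for a matching pair $\Bj_3, \Bj_4$ with the same difference vector. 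When no direct match exists, I would fall back on a two-step construction using cells with strictly positive $X_{+\Bj}$ as stepping stones, mirroring the picture arguments used in Theorem \ref{theorem:univariate} and Theorem \ref{theorem:bivariate}. Once a parallelogram is found, the corresponding move lies in ${\cal B}_{\Lambda(A_1 \otimes \cdots \otimes A_m)}$, and positivity of all $X_{+\Bj}$ should ensure that at least one signed application preserves nonnegativity while strictly decreasing the $L_1$ distance to $y$. Induction on $\|z\|_1$ then closes the loop.

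The main obstacle, and presumably why the result is posed as a conjecture, is precisely this combinatorial lemma. In the bivariate case the two weighted-sum constraints combine with the vanishing of $u_+$ to force a parallelogram pattern in $[J] \times [K]$ with the desired corner signs, but in $m$ dimensions the constraints decouple across parallel slices and leave room for sign patterns that concentrate on low-dimensional sublattices of $\mathbb{Z}^m$; such configurations can in principle obstruct every candidate parallelogram, and ruling them out appears to require genuinely new combinatorial input rather than a direct slice-by-slice reduction. A secondary difficulty is the case analysis needed to apply the move: in Theorem \ref{theorem:univariate} several subcases already appear depending on which of $x_{1,j_1+1}, x_{2,j_1+1}, \ldots$ are strictly positive, and for $m$ covariates the number of such subcases grows rapidly, so organizing them uniformly -- perhaps by an outer induction on $m$ that fixes all but two covariates at a time and invokes Theorem \ref{theorem:bivariate} on the residual two-dimensional face -- will require careful bookkeeping.
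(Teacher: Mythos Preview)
The paper does not prove this statement; it is explicitly left as a conjecture. Immediately after stating it the authors write that ``the separation lemma and some steps of the proof of Theorem \ref{theorem:bivariate} can be easily generalized to multiple logistic regression. However many steps of our proof, especially those for Cases 3 and 5, are restricted to the two-dimensional case.'' So there is no proof in the paper for you to compare against, and your proposal is not a proof either but an outline of the natural strategy together with an honest account of why it stalls. In that sense your assessment is largely in line with the authors'.

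That said, your description of the constraints on $u = (z_{1\Bj})$ is wrong in a way that matters. You write that for each covariate $i$ and \emph{each fixing of the remaining coordinates}, $\sum_{j_i} j_i\, u_{(\cdots j_i \cdots)} = 0$. That is not the model: the sufficient statistic involves only the marginal sums $\sum_{\Bj} j_i\, z_{1\Bj}$, so the linear constraint is $\sum_{\Bj} j_i\, u_{\Bj} = 0$ for each $i$, not a per-slice identity. If the per-slice version held, the problem would factor into independent univariate problems on each fiber of the projection and the conjecture would be immediate from Proposition \ref{prop:uni-1}. The genuine difficulty---and the reason Cases 3 and 5 in the paper's bivariate proof rely on the separation lemma and two-dimensional geometry---is precisely that only the aggregated constraints hold, so the sign pattern of $u$ need not be controlled on any individual line. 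Your later remark that ``the constraints decouple across parallel slices'' hints that you sense this, but it contradicts your earlier formula; you should state the constraints correctly before building the parallelogram lemma on top of them.

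Beyond this, your plan to find $\Bj_1,\Bj_2,\Bj_3,\Bj_4$ with $\Bj_1-\Bj_2=\Bj_3-\Bj_4$ by working one coordinate at a time is the natural first move, and the paper's separation lemma (Lemma \ref{ref:separation}) is the two-dimensional tool that makes this work for $m=2$. No $m$-dimensional analogue is offered, and that is the missing idea; the case analysis you mention is a secondary bookkeeping issue by comparison.
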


The separation lemma and some 
steps of the proof of Theorem \ref{theorem:bivariate} can be easily
generalized to multiple logistic regression. However many steps of 
our proof, especially those for Cases 3 and 5, 
are restricted to the two-dimensional case.

As discussed in Section \ref{sec:examples} it seems that we
can further restrict to the set of moves  
$z = \bm{e}_{\Bj_1} - \bm{e}_{\Bj_2} - \bm{e}_{\Bj_3}    + \bm{e}_{\Bj_4}$,
where the elements of $\Bj_1 - \Bj_2 = \Bj_3 - \Bj_4$ are  $\pm 1$  or 0.
Hence a stronger conjecture (even for the case of $m=2$) is given as follows.

\begin{conjecture}
The subset of moves from ${\cal B}_{\Lambda(A_1 \otimes \dots \otimes
  A_m)}$ such that the elements of $\Bj_1 - \Bj_2 = \Bj_3 - \Bj_4$ are  $\pm 1$  or 0
connects every fiber with positive response marginals
for the logistic regression with $m$ covariates.
\end{conjecture}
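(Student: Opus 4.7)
The plan is to prove this stronger conjecture conditionally on the preceding Conjecture~1, so that the task reduces to showing that any move $z \in {\cal B}_{\Lambda(A_1 \otimes \dots \otimes A_m)}$ can be written as a signed sum of moves from the restricted subset (call it ${\cal B}_0^{(m)}$) while staying inside the fiber at every partial sum. Once this replacement step is established, connectivity via ${\cal B}_0^{(m)}$ follows immediately by concatenation with the connectivity provided by Conjecture~1. This is the natural $m$-dimensional analogue of how Theorem~\ref{theorem:univariate} is bootstrapped from Proposition~\ref{prop:uni-1}.

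The replacement argument should proceed by induction on $\delta := \|\Bd\|_\infty$, where $\Bd = \Bj_1 - \Bj_2 = \Bj_3 - \Bj_4$ is the common difference vector attached to the move $z = \bm{e}_{\Bj_1} - \bm{e}_{\Bj_2} - \bm{e}_{\Bj_3} + \bm{e}_{\Bj_4}$. When $\delta = 1$ we are already in ${\cal B}_0^{(m)}$ and there is nothing to do. When $\delta \ge 2$, let $\Bu \in \{-1,0,+1\}^m$ be the componentwise sign of $\Bd$ (so that $\Bd - \Bu$ still has the same sign pattern as $\Bd$ but smaller $L_\infty$ norm). Set
\[
 \Bj_1' = \Bj_1 - \Bu, \qquad \Bj_4' = \Bj_4 + \Bu,
\]
and consider the auxiliary move $z' = -\bm{e}_{\Bj_1} + \bm{e}_{\Bj_1'} + \bm{e}_{\Bj_4'} - \bm{e}_{\Bj_4} \in {\cal B}_0^{(m)}$. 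By construction, $z + z'$ is a move in ${\cal B}_{\Lambda(A_1 \otimes \dots \otimes A_m)}$ whose difference vector is $\Bd - \Bu$, hence has strictly smaller $L_\infty$ norm, and the inductive hypothesis applies. The same idea, but shifting $\Bj_2,\Bj_3$ inward instead of $\Bj_1,\Bj_4$, produces three other candidate auxiliary moves; the applicability of at least one of them will have to be guaranteed from the positivity of the response marginals $X_{+\Bj}>0$, exactly as in the univariate argument in the proof of Theorem~\ref{theorem:univariate}.

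The technical heart of the replacement step is therefore a case analysis verifying that for any table $x$ in a fiber with positive response marginals, at least one of the four candidate auxiliary moves can actually be applied (i.e., the cells it decreases are positive in the appropriate row). In the univariate proof of Theorem~\ref{theorem:univariate}, this breaks into three generic subcases plus two ``degenerate'' subcases where all of $x_{2,j_1+1},\ x_{1,j_2-1},\ x_{2,j_3+1},\ x_{1,j_4-1}$ vanish, and the fix uses the auxiliary index $j_5$ with $x_{1,j_5}>0,\ x_{2,j_5+1}>0$, produced by a monotonicity/pigeonhole argument on the interval $(j_3,j_4)$. I would look for the $m$-dimensional analogue of this auxiliary index: a lattice point $\Bj_5$ on the segment between $\Bj_3$ and $\Bj_4$ (or more generally inside the parallelepiped spanned by $\Bj_1,\Bj_2,\Bj_3,\Bj_4$) where the required combination of positivities in the $i=1$ and $i=2$ slices occurs, then apply two successive ${\cal B}_0^{(m)}$-moves through $\Bj_5$ to obtain a net move with strictly smaller $\delta$.

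The main obstacle will be exactly the analogue of ``Cases~3 and~5'' that the authors already flag as two-dimensional in Theorem~\ref{theorem:bivariate}: in higher dimensions the segment from $\Bj_3$ to $\Bj_4$ need not contain any interior lattice point (if $\gcd$ of the coordinates of $\Bd$ is one), so the naive one-dimensional pigeonhole that produced $j_5$ has no direct replacement. One would instead have to exploit positivity of the marginals at all cells $\Bj$, not merely those on a line, and probably exchange ``one big move'' for a longer sequence of ${\cal B}_0^{(m)}$-moves routed through off-segment points. I expect that a successful proof will hinge on an $m$-dimensional combinatorial lemma of the form ``given positive $X_{+\Bj}$ on the bounding box of $\{\Bj_1,\Bj_2,\Bj_3,\Bj_4\}$, one can always find a $\{-1,0,+1\}$-staircase path from $\Bj_1$ to $\Bj_2$ (and $\Bj_3$ to $\Bj_4$) along which the appropriate row of $x$ is positive at each intermediate cell,'' and constructing such a lemma — essentially a positive-marginal analogue of the Segre-product Markov basis of Theorem~\ref{thm:segre} — is where the real work lies.
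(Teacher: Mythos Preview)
The statement you are addressing is Conjecture~2, which the paper leaves explicitly open; there is no proof in the paper to compare against. Even the $m=2$ case (the set ${\cal B}_0^2$ introduced in Section~\ref{sec:examples}) is stated without proof: the authors write that ``the theoretical proof of it is not clear at the present and is left to our future research.'' So your proposal is necessarily a research outline rather than a proof to be checked against an existing one.

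Your overall plan---assume Conjecture~1 and then replace each degree-two move by a sequence from ${\cal B}_0^{(m)}$, inducting on $\|\Bj_1-\Bj_2\|_\infty$---is the natural analogue of how Theorem~\ref{theorem:univariate} is obtained from Proposition~\ref{prop:uni-1}, and the algebra of your basic inductive step (subtracting the sign vector $\Bu$ from the difference vector) is sound. But you have correctly identified the real obstruction yourself: the ``degenerate'' subcases of the univariate proof are handled by locating an auxiliary index $j_5$ on the interval $(j_3,j_4)$ where the row-$1$/row-$2$ positivity flips, and in dimension $m\ge 2$ the segment from $\Bj_3$ to $\Bj_4$ typically contains no interior lattice points at all. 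Your suggested fix---a ``staircase'' lemma routed through off-segment cells---is plausible but entirely speculative, and it is exactly the missing ingredient, not a detail. Note too that the paper's proof of Theorem~\ref{theorem:bivariate} (the $m=2$ instance of Conjecture~1, not of Conjecture~2) already requires the separation Lemma~\ref{ref:separation} and a long case analysis (Cases~3--5) that go well beyond the univariate argument; descending further from ${\cal B}_{\Lambda(A\otimes B)}$ to ${\cal B}_0^2$ would demand still more, and the paper does not attempt it.

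In short: there is no gap to flag relative to the paper's proof, because the paper has none. Your outline is a reasonable starting point for an attack, but the combinatorial lemma you gesture at in your final paragraph would be the entire content of a proof.
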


In this paper we considered logistic regression, which is the Lawrence
lifting of Poisson regression.  Our Theorem \ref{thm:segre} describes 
Markov bases for a general Segre product of configurations. Therefore
it is interesting, in practice, to investigate connectivity result for Lawrence
lifting of a general Segre product of configurations.

In the bivariate logistic regression, it is interesting to test the
null hypothesis that the coefficient of one of the covariates is zero.
Generating random samples under the null hypothesis is simple because
it reduces to univariate logistic regression as follows.
In (\ref{model:bivariate}) consider the null hypothesis 
$H_0: \beta=0$. Given  observed data $(x_{ijk})$, we can generate 
random sample from the null conditional distribution by MCMC procedure for
the marginals $(x_{1j+})$, $j=1,\dots,J$.  Then for each $j$,
we can sample $x_{1jk}$, $k=1,\dots,K$,
by the random sampling  without replacement.

\section*{Appendix}
\appendix


\section{Proof of Theorem \ref{thm:segre}}

Let $x$ and $y$ be two tables in the same fiber.
Write $z=x-y$.
First consider the case that $x$ and $y$ already have the same marginals:
\[
0=z_{j+}, \forall j,  \quad 0=z_{+k}, \forall k.
\]
Then, as is well known for two-way complete independence model,
we can use the basic moves
to move from $X$ to $Y$. Note that
(\ref{eq:move}) is always satisfied in these steps.

Next consider the case that the row sums are already the same
\[
z_{j+}=0, \ j=1,\dots,J, 
\]
but the column sums are not yet the same.
For the moment, ignoring joint frequencies, 
just look at the column sums of $x$ and $y$:
\[
(x_{+1}, \dots, x_{+K}), \quad 
(y_{+1}, \dots, y_{+K})
\]
We can use the moves of ${\cal B}_B$ to move from the marginal
frequency $(x_{+1}, \dots, x_{+K})$ to the marginal
frequency $(y_{+1}, \dots, y_{+K})$.   
However, of course we have to worry about the joint frequencies and
the row sums.  Here the idea is that we can ``distribute'' moves of ${\cal B}_B$ 
to the cells of the $J\times K$ table, in such a way that we never disturb
the row sums.  This way, we can make column sums equal, while always keeping the
row sums equal.
Consider a situation that a move $z^B$ of ${\cal B}_B$
can be added to $(x_{+1}, \dots, x_{+K})$.  Then
we have
\[
x_{+k} \ge z^{B,-}_k,  \quad k=1,\dots,K.
\]
This shows that in each  column $k$ with $c_k=z^{B,-}_k > 0$, there are at least
$c_k$ positive frequencies of $x$, i.e., there exists  indices
$j_{1,k}, \dots, j_{c_k,k}$ such that 
\[
(x_{1k}, \dots, x_{Jk}) - (\Be_{j_{1,k}}+ \dots + \Be_{j_{c_k,k}}) \ge 0.
\]
Here ``$\ge 0$'' means that every component of the left-hand side is non-negative.
Collect the indices $j_{1,k}, \dots, j_{c_k,k}$ for
all $k$ with $z^{B,-}_k>0$  as $j_1, \dots, j_{\deg z^B}$.
Then $z^B(j_1, \dots, j_{\deg z^B})$ can be added  to $x$. Note that
$z^B$ is added to the marginal frequencies $(x_{+1}, \dots, x_{+K})$, but 
the move does not change the row sums of $x$.  
This argument implies that the set of moves $z^B(j_1, \dots, j_{\deg z^B})$
are sufficient for connecting two tables with the same row sums.

Lastly we consider the case that neither the row sums nor the column
sums are the same for $x$ and $y$.  We
can employ a ``greedy algorithm'', in which we first look at the row
sums only and try to make the row sums equal, because the column sums
can be adjusted later by the above argument.  Now in the above argument,
with the roles of the rows and the columns interchanged, 
we can ignore the fact that the column sums are not
yet equal.  We can use the same procedure as above.  Therefore,
by applying a move of the form
$z^A(k_1, \dots, k_{\deg z^A})$ we do 
not change the column sums of $x$ and $y$.  
Then we can make the row sums of $x$ and $y$ equal, while not changing
the column sums of $x$ and $y$. 

\section{A separation lemma}
Here we prove a lemma, which is needed for our proof of 
Theorem \ref{theorem:bivariate}.

\begin{lemma}
\label{ref:separation}
Let ${\cal I}=[J]\times [K]$ and let $S^+$ and $S^-$ be disjoint
subsets of ${\cal I}$ satisfying the following properties:
\begin{enumerate}
\setlength{\itemsep}{0pt}
\item 
$(j,k)\in S^+, \ j'\le j, \ k'\le k \quad \Rightarrow \quad (j',k')\in
S^+. $
\item 
$(j,k)\in S^-, \ j'\ge j, \ k'\ge k \quad \Rightarrow \quad (j',k')\in
S^-. $
\item There are no distinct four points 
 $(j_1, k_1)\in S^+, 
 (j_2,  k_2)\in S^-, (j_3, k_3)\not\in S^+, (j_4, k_4)\in S^+$  
 and there are no distinct four points
$(j_1, k_1)\in S^-, 
(j_2,  k_2)\in S^+, (j_3, k_3)\not\in S^-, (j_4, k_4)\in S^-$  
such that 
\[
(j_1, k_1)- (j_2, k_2)=(j_3, k_3)-(j_4,k_4).
\]
\end{enumerate}
Then there exists a line with rational slope separating $S^+$
and $S^-$, i.e.\ there exist integers $a,b,c$, $((a,b)\neq (0,0))$,
such that
\begin{equation}
 \label{separation}
 S^+ \subset \{(j,k)\in {\cal I} \mid a j + b k \le c \}, \qquad 
 S^- \subset \{(j,k)\in {\cal I} \mid a j + b k \ge c \}.
\end{equation}
\end{lemma}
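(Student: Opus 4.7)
The strategy is to prove the contrapositive: assuming no line of the form (\ref{separation}) exists, I would derive four points violating condition 3. The existence of integers $(a,b,c)$ with $(a,b)\neq (0,0)$ satisfying (\ref{separation}) is equivalent to the rational polytopes $P:=\mathrm{conv}(S^+)$ and $Q:=\mathrm{conv}(S^-)$ being weakly separable by a line. Both polytopes have integer vertices, so any real separating line can be rescaled to have integer coefficients; it therefore suffices to produce a real separating line. Assuming none exists, the standard separating hyperplane theorem implies that $\mathrm{relint}(P)\cap \mathrm{relint}(Q)\neq \emptyset$; pick a point $p$ in this intersection.

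Next, by Carath\'eodory's theorem in $\mathbb{R}^2$, $p$ is a strictly positive convex combination of at most three points of $S^+$ and simultaneously of at most three points of $S^-$. Among these at most six lattice points, Radon's theorem produces a four-element subset with a partition into two pairs whose connecting segments cross at an interior point. Using conditions 1 and 2, I would reduce to the case in which one segment connects two points $s_1^+, s_2^+\in S^+$ and the other connects two points $s_1^-, s_2^-\in S^-$: a ``triangle with three $S^+$-vertices containing an $S^-$-point'' configuration can be simplified by replacing the interior $S^-$-point with a componentwise-larger point still in $S^-$ (available by the up-set property), iterating until a pair of $S^-$-points realizes the crossing, and symmetrically for the opposite orientation.

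Finally, the crossing yields the required parallelogram. In the generic case where $s_1^+ + s_2^+ = s_1^- + s_2^-$ as integer vectors, I set $(j_1,k_1)=s_1^+$, $(j_4,k_4)=s_2^+$, $(j_2,k_2)=s_1^-$, $(j_3,k_3)=s_2^-$; since $(j_3,k_3)\in S^-$ and $S^+\cap S^-=\emptyset$, we have $(j_3,k_3)\notin S^+$, and the identity $(j_1,k_1)-(j_2,k_2)=(j_3,k_3)-(j_4,k_4)$ holds, giving a direct violation of condition 3. In the non-generic case, where the two crossing segments intersect at a non-lattice point, I would exploit the staircase structure to slide one of the four points to an adjacent lattice point (staying in $S^+$ or $S^-$ by monotonicity) until the parallelogram equation holds exactly. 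Verifying this integer adjustment along staircase shifts, while keeping the four points distinct and within $[J]\times [K]$, is the main technical obstacle.
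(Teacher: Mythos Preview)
Your approach is genuinely different from the paper's and has a real gap at the final step. Radon's theorem, applied to four of your six points, yields either two crossing segments or a triangle containing a point; neither configuration gives the parallelogram identity $(j_1,k_1)-(j_2,k_2)=(j_3,k_3)-(j_4,k_4)$. Your ``generic case'' $s_1^++s_2^+=s_1^-+s_2^-$ is in fact highly special: it says the two segments share a midpoint, which crossing alone does not imply. The proposed staircase repair is not merely a technicality. Monotonicity lets you move $S^+$-points \emph{down/left} and $S^-$-points \emph{up/right}, so you can only \emph{decrease} $s_1^++s_2^+$ and \emph{increase} $s_1^-+s_2^-$; if the initial crossing has $s_1^++s_2^+$ smaller than $s_1^-+s_2^-$ in some coordinate (which is easy to arrange, e.g.\ $s_1^+=(1,5),\,s_2^+=(5,1),\,s_1^-=(2,6),\,s_2^-=(6,2)$), no such shift closes the gap. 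You also have not controlled that the shifted points remain inside $[J]\times[K]$ or stay distinct. As written, the argument does not produce a violation of condition~3.

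For contrast, the paper does not argue by contradiction through convex hulls. It works directly with the lower staircase boundary of $S^-$: setting $f(j)=\min\{k:(j,k)\in S^-\}$, it forms the greatest convex minorant $f^*$, whose breakpoints $(j_1,f(j_1)),(j_4,f(j_4))$ lie in $S^-$. If some $(j_2,k_2)\in S^+$ sat strictly above a linear piece of $f^*$, its reflection $(j_3,k_3)=(j_1,f(j_1))-(j_2,k_2)+(j_4,f(j_4))$ would lie strictly below $f^*\le f$ and hence outside $S^-$, giving exactly the forbidden quadruple in condition~3 (second clause). Thus each linear piece of $f^*$ already separates, and the global line is obtained by separating $f^*$ from the analogous concave majorant for $S^+$. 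The key difference is that the paper \emph{chooses} two of the four points to be breakpoints of a single linear piece, which forces the parallelogram identity automatically; your Radon configuration does not come with any such built-in alignment.
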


\begin{proof}
The lemma obviously holds if $S^+$ or $S^-$ is empty.  Therefore we only
need to consider case that $S^+$ and  $S^-$  are non-empty.
Define $j_l= \min\{ j \mid \exists (j,k) \in S^-\}$ and for $j\in
\{j_l, j_l+1, \dots, J\}$ define 
$f(j)= \min\{k \mid (j,k)\in S^- \}$. Let $f^*$ be the largest convex
minorant \citep{moriguti1953} of $f(j)$, $j\in \{j_l, j_l+1, \dots, J\}$, 
i.e.\ $f^*(\cdot)$ is the maximum among convex functions
not exceeding $f(j)$ for each $j\in \{j_l, j_l+1, \dots, J\}$. 
Then  $f^*$ consists of finite number of 
line segments. Let $j_1 < j_4$ be endpoints of a line segment and let
$L^*_{j_1, j_4}$ denote the line segment.  Then
$(j_1, f(j_1)), (j_4, f(j_4)) \in S^-$. Also by construction of $f^*$, 
\[
(j,k)\in S^-, \ j_1 \le j \le j_4 \quad \Rightarrow \quad  k\ge f^*(j).
\]
Therefore every point strictly below $L^*_{j_1, j_4}$ belongs to
$S^+$.
Consider the rectangular region  of integer points
\[
R_{j_1, j_4}= \{j_1, \dots, j_4\}\times \{f(j_4), \dots, f(j_1)\}.
\]
If there exists a point $(j_2,k_2)\in S^+ \cap R_{j_1, j_4}$ 
strictly  above the line segment $L^+_{j_1, j_4}$, let
$(j_3, k_3)=(j_1, k_1)- (j_2, k_2)+(j_4,k_4)\in (S^+)^C  \cap R_{j_1, j_4}$
and condition 3 of the lemma is violated.  This shows that 
no point of $R_{j_1, j_4}$ strictly above $L^+_{j_1, j_4}$ belongs to
$S^-$.  
Also the points above $R_{j_1, j_4}$ belong to $S^+$ by the
monotonicity condition (2).
Therefore $L^+_{j_1, j_4}$ is a separating line for
the interval $\{j_1, \dots, j_4\}$.

Now we similarly construct the smallest concave majorant $f_*(j)$ for $S^-$. Then
by a hyperplane separation theorem for two convex sets, there exists a
line with rational slope between $f_*(j)$ and $f^*(j)$. This prove the
lemma.
\end{proof}

\section{Proof of Theorem \ref{theorem:bivariate}}
  Let $x := \{x_{ijk}\}$ and $y := \{y_{ijk}\}$ be two
  $2 \times J \times K$ tables in the same fiber satisfying 
  $x_{+jk} = y_{+jk} > 0$. 
  Then $z := \{z_{ijk}\} = x - y$ is a move for $\Lambda(A \otimes B)$. 
  Let $z^1$ denote the $(i=1)$-slice of $z$. 
  As mentioned in Section \ref{sec:connectivity}, $z$ satisfies 
  $z_{+jk} = 0$, $z_{i++} = 0$, $\forall i,j,k$, and
  \begin{equation}
   \label{linear-constraint}
   \sum_{j=1}^J j z_{1j+} = 0 ,   \qquad 
   \sum_{k=1}^K k z_{1+k} = 0.
  \end{equation}
Note that  $z_{i++}=0$ implies 
\begin{equation}
\label{eq:j-direction}
\sum_{j=1}^J j z_{1j+} = 0  \quad \Leftrightarrow \quad
\sum_{j=1}^J (J-j+1) z_{1j+} = 0.
\end{equation}
Similarly $\sum_{k=1}^K k z_{1+k} = 0  \ \Leftrightarrow\ 
\sum_{k=1}^K (K-k+1) z_{1+k} = 0$.  This implies that when
we consider a sign pattern of a move, we can arbitrarily choose
directions for two factors $j$ and $k$.

Let ${\cal I}^+$ and ${\cal I}^-$ be the multisets of
indices defined by 
  \[
   {\cal I}^+ := \{(j,k) \mid z_{1jk} > 0\}, \quad 
   {\cal I}^- := \{(j,k) \mid z_{1jk} < 0\}, 
  \]
  where the multiplicity of $(j,k)$ in ${\cal I}^+$ and ${\cal I}^-$ is
  $\vert z_{1jk} \vert$.

  Suppose that $(j_1,k_1) \in {\cal I}^+$, $(j_2,k_2) \in {\cal I}^-$ 
  and $j_1 < j_2$. 
  Then we note that there exist $j_3 < j_4$, $k_3$ and $k_4$
  satisfying 
  \begin{equation}
   \label{condition1}
   (j_3,k_3) \in {\cal I}^- \setminus \{(j_2,k_2)\}, \quad 
   (j_4,k_4) \in {\cal I}^+ \setminus \{(j_1,k_1)\}
  \end{equation}
  from (\ref{linear-constraint}). 
  If $k_1 < k_2$ and $k_3 > k_4$, 
  there exists $k_5 < k_6$, $j_5$ and $j_6$
  satisfying 
  \[
   (j_5,k_5) \in {\cal I}^- \setminus \{(j_2,k_2), (j_3,k_3)\}, \quad 
   (j_6,k_6) \in {\cal I}^+ \setminus \{(j_1,k_1), (j_3,k_4)\}. 
  \]

Write 
$y(j_1,j_2;k_1,k_2)=\Be_{j_1 k_1}-\Be_{j_2 k_2}$ .
When a move $z$ includes $y(i_1,i_2;j_1,j_2)$, we denote it by 
$y(i_1,i_2;j_1,j_2) \subset z$. 

\bigskip 
\noindent {\bf Case 1.}\quad 
We first consider the case where there exist $j_0$, $j_1$, $j_2$, 
$k_0$, $k_1$ and $k_2$ such that 
\begin{equation}
 \label{condition:j}
  z_{1 j_0 k_1} > 0, \quad z_{1 j_0 k_2} < 0,
\end{equation}
\begin{equation}
 \label{condition:k}
  z_{1 j_1 k_0} > 0, \quad z_{1 j_2 k_0} < 0.
\end{equation}
Without loss of generality we assume  $j_1 < j_2$ and $k_1 < k_2$.
Let $S^+=\{(j,k) \mid \exists (j',k')\in {\cal I}^+, \ j\le j', k\le
k'\}$.
Similarly define $S^- =\{ (j,k) \mid \exists (j',k') \in 
{\cal I}^-, \ j\ge j', k\ge k'\}$.
We only need to consider the case that the condition
3 of Lemma \ref{ref:separation} is satisfied.  Also, 
if $S^+$ or  $S^-$ is not monotone in the sense of 
conditions 1 and 2 of Lemma \ref{ref:separation},  
we can reduce the $L_1$ distance between $x$ and $y$.  
This can be seen as follows. If 
$S^+$ or  $S^-$ is not monotone, then we can find a pattern
in Figure \ref{fig:Case2-1} (or a vertical pattern of this).
Without loss of generality let 
$j_2 - j_1 \le j_4 - j_3$
and define $j_5 := j_4 - (j_2-j_1)$.  For simplicity assume $j_2 <
j_3$ or $k_0\neq k_3$.
Then by applying 
\[
z_1 := - \bm{e}_{j_1 k_0} + \bm{e}_{j_2 k_0}
- \bm{e}_{j_5 k_1} + \bm{e}_{j_4 k_1}
\]
to $z$, we can reduce the $L_1$ distance between $x$ and $y$ by at least
four. 
\begin{figure}[htbp]
 \[
 \begin{array}{cccccc}
  & j_1 & & & & j_2\\ \cline{2-6}
   k_0 & \multicolumn{1}{|c}{+} & 0 
   & \cdots & 0 & \multicolumn{1}{c|}{-}\\ 
  \cline{2-6}
 \end{array}\quad 
 \begin{array}{cccccccc}
  & j_3 & & j_5 & & & & j_4\\ \cline{2-8}
   k_1 & \multicolumn{1}{|c}{-} & 0 &  
   & \cdots & & 0 & \multicolumn{1}{c|}{+}\\ 
  \cline{2-8}
 \end{array}
 \]
 \caption{Case 1 or Case 2-1}
 \label{fig:Case2-1}
\end{figure}
The case of $k_0=k_1$ and $j_2=j_3$ needs a special consideration, but 
the monotonicity holds 
with respect to the horizontal separating line through $(k_0,j_2)$.
Therefore it suffices to
consider the case that $S^+$ and $S^-$ are monotone
in the sense of 
conditions 1 and 2 of Lemma \ref{ref:separation}. 
Then  \[
\sum_{j,k}(aj + bk + c)z_{1jk}=0
\]
implies that non-zero  elements $z_{1jk}\neq 0$ only exist on the
line $\{(j,k) \mid aj + bk + c\}$.  Then the problem reduces to the
univariate logistic regression.

\bigskip 
\noindent {\bf Case 2.}\quad 
Next we consider the case 
that only one of the patterns of 
 (\ref{condition:j}) or (\ref{condition:k}) exists.
Without loss of generality, we assume that (\ref{condition:k}) holds
 and 
from  Lemma \ref{lemma:appendix-1} we assume that 
there exist $j_1 < j_2 \le j_3 < j_4$
such that  
\[
 z_{1 j_1 +} > 0, \quad z_{1 j_2 +} < 0, \quad z_{1 j_3 +} < 0, \quad
 z_{1 j_4 +} > 0.
\]
In this case
either a pattern of signs in Figure \ref{fig:Case2-1} 
or a pattern in Figure \ref{fig:Case2-2} has to exist in $z^1$.


\begin{figure}[htbp]
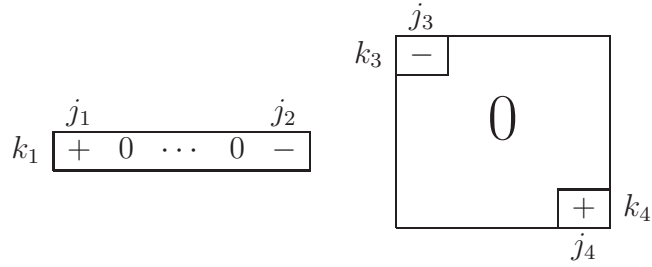

 \[
 \begin{array}{c}
 \begin{array}{cccccc}
  & j_1 & & & & j_2\\ \cline{2-6}
   k_1 & \multicolumn{1}{|c}{+} & 0 
   & \cdots & 0 & \multicolumn{1}{c|}{-}\\ 
  \cline{2-6}
 \end{array}\quad
 \begin{array}{ccccccc}
  & j_3 \\ \cline{2-6}
   k_3 & \multicolumn{1}{|c|}{-} & & & & \multicolumn{1}{c|}{}\\ 
  \cline{2-2}
   & \multicolumn{1}{|c}{} & & & & \multicolumn{1}{c|}{}\\ 
  & \multicolumn{1}{|c}{} & & \hsymb{0} & & \multicolumn{1}{c|}{}\\ 
  & \multicolumn{1}{|c}{} & &  & & \multicolumn{1}{c|}{}\\ 
  \cline{6-6}
   & \multicolumn{1}{|c}{} & & & & \multicolumn{1}{|c|}{+}
   & k_4 \\ \cline{2-6}
   & & & & & j_4
 \end{array} 
\end{array}
\]
 \caption{Case 2-2}
 \label{fig:Case2-2}
\end{figure}

\medskip\noindent
{\bf Case 2-1.} \ The case of Figure \ref{fig:Case2-1}.\\
In this case we can reduce the $L_1$ distance between $x$ and $y$ as in Case 1.


\medskip\noindent
{\bf Case 2-2.} \ The case of Figure \ref{fig:Case2-2}.\\
In the case of  Figure \ref{fig:Case2-2},    
we  distinguish two subcases depending on 
$j_2- j_1 \le j_4 - j_3$ or 
$j_2- j_1 > j_4 - j_3$.
\begin{description}
  \item[Case 2-2-1. ] $j_2 - j_1 \le j_3 - j_4$. \\
  Let $j_5:= j_3 + (j_2 - j_1)$. 
             By applying 
             \[
             z_{2a} := 
             - \bm{e}_{j_1 k_1} + \bm{e}_{j_2 k_1}
             + \bm{e}_{j_3 k_3} - \bm{e}_{j_5 k_3} 
             \]
             to $z$, we reduce the $L_1$ distance by four.
\end{description}

\begin{description}         
\item[Case 2-2-2.]  $j_2 - j_1 > j_4 - j_3$.  \\
In this case we prove the theorem by induction on $j_4-j_3$. 
When $j_4-j_3=0$, the problem is reduced to Case 1.  
Therefore we assume that $j_4 - j_3 > 0$.
\end{description}

\begin{description}         
 \item[Case 2-2-2-1.] 
	    $(x_{1, j_1+1,k_1}, x_{1, j_4-1,k_4}) > 0$ or 
	    $(x_{2, j_1+1,k_1}, x_{2, j_4-1,k_4}) > 0$. \\
	    In this case we can apply
            \[
            z_{2b} :=
            - \bm{e}_{j_1 k_1} + \bm{e}_{j_1+1, k_1}
            + \bm{e}_{j_4-1, k_4} - \bm{e}_{j_4, k_4}
            \]
            to $z$ and then 
            \[
            y(j_3,j_4-1;k_3,k_4) \subset 
            z + z_{2b}, \quad 
            \Vert z + z_{2b} \Vert_1 = \Vert z \Vert_1.
            \]
            Hence 
            $\Vert z \Vert_1$ is can be reduced by moves of 
            ${\cal B}_{\Lambda(A \otimes B)}$ from the inductive assumption. 
            
            In the case where 
            $(x_{1, j_2-1,k_2}, x_{1, j_3+1,k_3}) > 0$ or 
            $(x_{2, j_2-1,k_2}, x_{2, j_3+1,k_3}) > 0$, 
            the proof is similar.
 \item[Case 2-2-2-2. ] $(x_{1, j_1+1,k_1},x_{2, j_3+1, k_3})>0$ 
            or
            $(x_{2, j_1+1,k_1},x_{1, j_3+1, k_3})>0$, \\
            In this case we can apply 
            \[
            z_{2c} :=
            - \bm{e}_{j_1 k_1} + \bm{e}_{j_1+1, k_1}
            + \bm{e}_{j_3 k_3} - \bm{e}_{j_3+1, k_3}
            \]
            to $z$ and then 
            \[
            y(j_3+1,j_4;k_3,k_4) \subset 
            z + z_{2c} .
            \]
            Therefore 
            $\Vert z \Vert_1$ can be reduced by moves of 
            ${\cal B}_{\Lambda(A \otimes B)}$ from the inductive
            assumption.  

            In the case where 
            $(x_{1 j_2-1,k_2}, x_{2 j_4-1,k_4}) > 0$ or  
            $(x_{2 j_2-1,k_2}, x_{1 j_4-1,k_4}) > 0$, 
            the proof is similar.

 \item[Case 2-2-2-3. ] 
            $(x_{1, j_1+1, k_1},x_{2, j_2-1,k_2}, x_{1, j_3+1,k_3},
            x_{2, j_4-1,k_4}) = 0.$\\
            In this case we have
            \[
            (x_{2, j_1+1, k_1},x_{1, j_2-1,k_2}, x_{2, j_3+1,k_3},
            x_{1, j_4-1,k_4}) > 0.
            \]
            Then there exists $j_1 < j_5 < j_2$ such that 
            $(x_{2 j_5 k_1},x_{1 j_5+1,k_1}) > 0$. 
            Hence we can apply 
            \[
            z_{2d}^1 := - \bm{e}_{j_1k_1} 
            + \bm{e}_{j_1,k_1+1} 
            + \bm{e}_{j_5 k_1} 
            - \bm{e}_{j_5+1,k_1} 
            \]
            and 
            \[
            z_{2d}^2 := - \bm{e}_{j_5 k_1} 
            + \bm{e}_{j_5+1,k_1} 
            + \bm{e}_{j_4-1, k_4} 
            - \bm{e}_{j_4 k_4} 
            \]
            to $z$ in this order and then we have 
            \[
            \Vert z + z_{2d}^1 + z_{2d}^2 \Vert_1 
            = \Vert z \Vert_1 \quad \text{and} \quad
            y(j_1,k_1+1;j_2,k_2) \subset
            z + z_{2d}^1 + z_{2d}^2.
            \]
            Hence theorem holds from the inductive assumption.

            In the case where 
            $(x_{2, j_1+1, k_1},x_{1, j_2-1,k_2}, x_{2, j_3+1,k_3},
            x_{1, j_4-1,k_4}) > 0$, 
            the proof is similar.
\end{description}

\bigskip

\begin{figure}[htbp]
\[
 \begin{array}{cc}
\begin{array}{ccccccc}
 & j_1 \\ \cline{2-6}
  k_1 & \multicolumn{1}{|c|}{+} & & & &
  \multicolumn{1}{c|}{}\\ 
 \cline{2-2}
  & \multicolumn{1}{|c}{} & & \hsymb{0} & & \multicolumn{1}{c|}{}\\ 
 \cline{6-6}
  & \multicolumn{1}{|c}{} & & & & \multicolumn{1}{|c|}{-}
  & k_2 \\ \cline{2-6}
  & & & & & j_2
\end{array} &
\begin{array}{ccccc}
 & j_3 \\ \cline{2-4}
  k_3 & \multicolumn{1}{|c|}{-}  & &
  \multicolumn{1}{c|}{}\\ 
 \cline{2-2}
  & \multicolumn{1}{|c}{} & &  \multicolumn{1}{c|}{}\\ 
 & \multicolumn{1}{|c}{} & \hsymb{0} & \multicolumn{1}{c|}{}\\ 
 & \multicolumn{1}{|c}{} & & \multicolumn{1}{c|}{}\\ 
 \cline{4-4}
  & \multicolumn{1}{|c}{} & & \multicolumn{1}{|c|}{+}
  & k_4 \\ \cline{2-4}
  & & & j_4
\end{array}\\
\end{array}
\]
 \caption{Case 3}
 \label{fig:Case3}
\end{figure}

\begin{figure}[htbp]
\[
 \begin{array}{c}
  \begin{array}{ccccccc}
   & j_1 \\ \cline{2-6}
    k_1 & \multicolumn{1}{|c|}{+} & & & &
    \multicolumn{1}{c|}{}\\ 
   \cline{2-2}
    & \multicolumn{1}{|c}{} & & \hsymb{0} & & \multicolumn{1}{c|}{}\\ 
   \cline{6-6}
    & \multicolumn{1}{|c}{} & & & & \multicolumn{1}{|c|}{-}
    & k_2 \\ \cline{2-6}
    & & & & & j_2
  \end{array}\quad 
  \begin{array}{ccccc}
   & & & j_4 \\ \cline{2-4}
    & \multicolumn{1}{|c}{}  & &
    \multicolumn{1}{|c|}{+} & k_4\\ 
   \cline{4-4}
    & \multicolumn{1}{|c}{} & &  \multicolumn{1}{c|}{}\\ 
   & \multicolumn{1}{|c}{} & \hsymb{0} & \multicolumn{1}{c|}{}\\ 
   & \multicolumn{1}{|c}{} & & \multicolumn{1}{c|}{}\\ 
   \cline{2-2}
    k_3 & \multicolumn{1}{|c|}{-} & & \multicolumn{1}{c|}{}
    & \\ \cline{2-4}
    & j_3 & & 
  \end{array}\\
  \end{array}
\]
 \caption{Case $5$}
 \label{fig:Case3a}
\end{figure}

\noindent {\bf Case 3.}\quad 
We now consider the case that there exist no $j_0, k_1, k_2$
satisfying (\ref{condition:j}) and there exist no $k_0, j_1, j_2$
satisfying (\ref{condition:k}). 
From Lemma \ref{lemma:appendix-1}, 
either of the patterns of signs as in Figure \ref{fig:Case3} 
and Figure \ref{fig:Case3a} has to exist in $z^1$.  
Here we consider the case that patterns in Figure \ref{fig:Case3} exist.
The case of Figure  \ref{fig:Case3a}
will be treated as Case 5 below.
We make various subcases depending on the sizes of two rectangles in
Figure \ref{fig:Case3}. 

\begin{description}
 \item[Case 3-1. ]
            $j_2-j_1 \ge j_4-j_3$ and $k_2-k_1 \ge k_4-k_3$.\\
            In this case the left rectangle contains the right 
            rectangle in Figure \ref{fig:Case3}.
            Define $(j_5,k_5)$ by 
            \[
            (j_5,k_5) := (j_1,k_1) -  (j_3,k_3) +  (j_4,k_4) .
            \]
            Then 
            \[
             z_{3a} := - \bm{e}_{j_1 k_1} + \bm{e}_{j_5 k_5}
            + \bm{e}_{j_3 k_3} - \bm{e}_{j_4 k_4}
            \]
            reduces the $L_1$ distance by four.

            In the case where 
            $j_2-j_1 \le j_4-j_3$ and $k_2-k_1 \le k_4-k_3$, 
            the proof is similar. 
\end{description}

\begin{description}
 \item[Case 3-2. ] 
 When there is no inclusion relation between two rectangles of Figure
 \ref{fig:Case3}, it suffices to consider the case of $j_2-j_1 > j_4-j_3$ and 
 $k_2-k_1 < k_4-k_3$. 
 We prove the theorem by induction on $l := (k_2-k_1) + (j_4 - j_3)$.
 If $l = 0$, the theorem holds from Case 1.
\end{description}

\begin{description}
 \item[Case 3-2-1. ] $(x_{1j_1,k_1+1},x_{1j_4,k_4-1}) > 0$.\\ 
            In this case we can apply 
            \[
            z_{3b} := -\bm{e}_{j_1 k_1} + \bm{e}_{j_1, k_1+1}
            +\bm{e}_{j_4, k_4-1} + \bm{e}_{j_4 k_4}
            \]
            and then we have
            \[
            y(j_1,j_2;k_1+1,k_2) \subset z + z_{3b}, \quad 
            y(j_3,j_4;k_4,k_4-1) \subset z + z_{3b}.
            \]
            From the inductive assumption 
            the theorem holds in this case.
                        
            Also in the following cases, the proof is similar.
 \begin{itemize}
  \item $(x_{2j_1,k_1+1},x_{2j_4,k_4-1}) > 0$ ;
  \item $(x_{i,j_1+1,k_1},x_{i,j_4-1,k_4}) > 0$ ;
  \item $(x_{ij_2,k_2-1},x_{ij_3,k_3+1}) > 0$ ;
  \item $(x_{i,j_2-1,k_2},x_{i,j_3+1,k_3}) > 0$ ;
 \end{itemize}

 \item[Case 3-2-2. ]  $(x_{1j_1,k_1+1},x_{2j_3,k_3+1}) > 0$. \\
            In this case we can apply 
            \[
            z_{3c} := -\bm{e}_{j_1 k_1} + \bm{e}_{j_1, k_1+1}
            +\bm{e}_{j_3 k_3} + \bm{e}_{j_3, k_3+1}
            \]
            and then we have 
            \[
            y(j_1, j_2 ; k_1 + 1, k_2) \subset z + z_{3c}, 
            \quad 
            y(j_3, j_4 ; k_3 + 1, k_4) \subset z + z_{3c}.
            \]
            From the inductive assumption 
            the theorem holds in this case.
                        
            Also in the following cases, the proof is in the similar
            way.  
            \begin{itemize}
             \item $(x_{2,j_1,k_1+1},x_{1,j_3,k_3+1}) > 0$ ;
             \item $(x_{i,j_1+1,k_1},x_{i^*,j_3+1,k_3}) > 0$ ;
             \item $(x_{i,j_2,k_2-1},x_{i^*,j_4,k_4-1}) > 0$ ;
             \item $(x_{i,j_2-1,k_2},x_{i^*,j_4-1,k_4}) > 0$, 
            \end{itemize}
            where $i^* := 3 - i$.
\end{description}                      

\begin{description}
\item[Case 3-2-3.]   $x_{2j_1,k_1+1} = x_{1j_2,k_2-1}
  = x_{2j_3,k_3+1} = x_{1j_4,k_4-1}   =0$.\\
From the result of Case 3-2-1  and Case 3-2-2, 
it suffices to consider the case where 
\begin{equation}
 \label{case:3}
  x_{2j_1,k_1+1} = x_{1j_2,k_2-1}
  = x_{2j_3,k_3+1} = x_{1j_4,k_4-1} 
  =0. 
\end{equation}
We note that (\ref{case:3}) implies
\[
(x_{1 j_1,k_1+1}, x_{2 j_2,k_2-1}, 
x_{1 j_3,k_3+1}, x_{2 j_4,k_4-1})
> 0.
\]
Therefore there exist $j_1 < j_5 < j_2$ and $k_1 < k_5 < k_2$ satisfying
either 
\begin{equation}
 \label{pattern1}
  x_{1 j_5 k_5} > 0 \quad x_{2 j_5,k_5+1} > 0 
\end{equation}
or
\begin{equation}
 \label{pattern2}
  x_{1 j_5 k_5} > 0 \quad x_{2,j_5+1,k_5} > 0.
\end{equation} 
\end{description}

\begin{description}
 \item[Case 3-2-3-1. ] $x_{1 j_5 k_5} > 0$ and $x_{2 j_5,k_5+1} > 0 $ \quad
(\ref{pattern1}).\\
            In this case we can apply 
            \[
            z_{3d}^1 := - \bm{e}_{j_1 k_1} + \bm{e}_{j_1,
            k_1+1} + \bm{e}_{j_5 k_5} - \bm{e}_{j_5, k_5+1}
            \]
            and 
            \[
            z_{3d}^2 := - \bm{e}_{j_5 k_5} + \bm{e}_{j_5,k_5+1} 
            + \bm{e}_{j_4 k_4-1} - \bm{e}_{j_4 k_4}
            \]
           to $z$ in this order. 
           Then we have 
           $\Vert z + z_{3d}^1 + z_{3d}^2 \Vert_1 
           = \Vert z \Vert_1$
           and 
           \[
           y(j_1,j_2;k_1+1,k_2) \subset 
           z + z_{3d}^1 + z_{3d}^2, 
           \quad
           y(j_3,j_4;k_3,k_4-1) \subset
           z + z_{3d}^1 + z_{3d}^2 .
           \]
           Hence from the inductive assumption 
            the $L_1$ distance can be reduced by moves in 
            ${\cal B}_{\Lambda(A \otimes B)}$. 

\end{description}


\begin{description}
\item[Case 3-2-3-2.]
  $x_{1 j_5 k_5} > 0$ and $x_{2,j_5+1,k_5} > 0$ \quad
 (\ref{pattern2}).\\
In this case we further consider subcases depending on the
value of $x_{1,j_1+1,k}$.
\end{description}

\begin{description}  
\item[Case 3-2-3-2-1.] $x_{1,j_1+1,k_1} > 0$.\\ 
From the result of Case 3-2-1 and and Case 3-2-2,
it suffices to consider the case where 
            \begin{equation}
             \label{case:3-2}
              x_{2,j_1+1,k_1} = x_{1,j_4-1,k_4} 
              = x_{2,j_3+1,k_3} = x_{1,j_2-1,k_2}=0. 
            \end{equation}
            We note that (\ref{case:3-2}) implies that 
            \[
            (
            x_{1,j_1+1,k_1}, x_{2,j_4-1,k_4}, 
            x_{1,j_3+1,k_3}, x_{2,j_2-1,k_2}
            ) > 0.
            \]
            Since (\ref{pattern2}) is satisfied, 
            we can apply 
            \[
            z_{3e}^1 := - \bm{e}_{j_1 k_1} + \bm{e}_{j_1+1,k_1}
            + \bm{e}_{j_5 k_5} - \bm{e}_{j_5+1, k_5} 
            \]
            and 
            \[
            z_{3e}^2 := - \bm{e}_{j_5 k_5} + \bm{e}_{j_5+1,k_5} 
            + \bm{e}_{j_4-1, k_4} - \bm{e}_{j_4 k_4}
            \]
            in this order. Then we have                          
            $\Vert z + z_{3e}^1 + z_{3e}^2 \Vert_1 =
            \Vert z \Vert_1$ 
            and 
            \[
             y(j_1+1,j_2;k_1,k_2) \subset
            z + z_{3e}^1 + z_{3e}^2, \quad
            y(j_3,j_4-1;k_3,k_4) \subset
            z + z_{3e}^1 + z_{3e}^2. 
            \]
            Hence from the inductive assumption, 
            $L_1$ distance can be reduced by moves in 
            ${\cal B}_{\Lambda(A \otimes B)}$. 

If any of $x_{2,j_2-1,k_2}, x_{1,j_3+1,k_3}, x_{2,j_4-1,k_4}$ is
positive, the same argument can be applied.
\end{description}

\begin{description}
\item[Case 3-2-3-2-2. (Case 4)] \ 
$
  x_{1,j_1+1,k_1} = x_{2,j_2-1,k_2} 
  = x_{1,j_3+1,k_3} = x_{2,j_4-1,k_4} 
  = 0
$.\\
For readability, we relabel this case as Case 4.
In this case
\[
(
x_{2,j_1+1,k_1}, x_{1,j_2-1,k_2}, 
x_{2,j_3+1,k_3}, x_{1,j_4-1,k_4}
) > 0.
\]
Then there exists $j_1 < j_6 < j_2$ and 
$k_1 < k_6 < k_2$ satisfying either  
\begin{equation}
 \label{pattern3}
  x_{2j_6 k_6} > 0, \quad x_{1,j_6+1,k_6} > 0 
\end{equation}
or
\begin{equation}
 \label{pattern4}
  x_{2j_6 k_6} > 0, \quad x_{1,j_6,k_6+1} > 0.
\end{equation} 
\end{description}

\begin{description}
 \item[Case 4-1. ]
            The case that (\ref{pattern3}) is satisfied. \\
            In this case the proof is in similar to Case 3-2-3-1.
\end{description}

\begin{description}
 \item[Case 4-2. ]
The case that (\ref{pattern3}) is not satisfied.\\
In this case 
\begin{equation}
 \label{pattern5}
  x_{1 j k_1} = 0, \quad j = j_1+1, \ldots, j_2, \quad 
  x_{2 j_3 k} = 0, \quad k = k_3+1, \ldots, k_4.
\end{equation}
We can assume without loss of generality that $j_2 < j_3$. 
Then we note that 
\[
(j_7,k_7) := (j_4,k_4) - (j_2,k_2) + (j_1,k_1) \in {\cal J}
\]
where 
${\cal J} := [J] \times [K]$. 
\end{description}

\begin{description}
 \item[Case 4-2-1. ] $x_{2 j_7 k_7} > 0$ or $y_{1 j_7 k_7} > 0$.\\
            In this case we can apply 
            \[
            z_{4a} := -\bm{e}_{j_1 k_1} + \bm{e}_{j_2 k_2}
            +\bm{e}_{j_7 k_7} - \bm{e}_{j_4 k_4}
            \]
            to $z$  and 
            we can reduce the $L_1$ distance by four.
\end{description}

\begin{description}
 \item[Case 4-2-2. ] 
$x_{2 j_7 k_7} = 0$ and $y_{1 j_7 k_7} = 0$.\\
In this case  we have $z_{1 j_7 k_7} > 0$.
\end{description}

\begin{description}
 \item[Case 4-2-2-1.] 
The case that there exists $j_7 < j_8 < j_3$ such that 
            $z_{1 j_8 k_7} < 0$.\\
In this case we can prove the theorem in the same way as  Case 2-2.
\end{description}

\begin{description}
 \item[Case 4-2-2-2.] 
The case that  $z_{1 j k_7} \ge 0$ for all $j_7 < j < j_3$.\\
From the condition (\ref{pattern5}) there exists $j_9$ satisfying either
of the following conditions, 
\begin{enumerate}
 \item[(i)] $j_7 \le j_9 < j_3$, 
            $z_{1 j_9 k_7} > 0$ and 
            $x_{1, j_9+1, k_7} > 0$ ; 
 \item[(ii)] $j_7 < j_9 < j_3$, 
            $z_{1 j_9 k_7} = z_{1 j_9+1, k_7} = 0$, 
            $x_{2 j_9 k_7} > 0$
            and $x_{1 j_9+1, k_7} > 0$.
\end{enumerate}
\end{description}

\begin{description}
 \item[Case 4-2-2-2-1. ]
            The case that  (i) is satisfied. \\
            In this case by applying the move 
            \[
            z_{4b} := -\bm{e}_{j_9 k_7} + \bm{e}_{j_9+1, k_7}
            + \bm{e}_{j_4 -1, k_4} - \bm{e}_{j_4 k_4}, 
            \]
            we have 
            $\Vert z + z_{4b} \Vert_1 = \Vert z \Vert_1$
            and  
            \[
            y(j_1,j_2;k_1,k_2) \subset z + z_{4b}, \quad
            y(j_3,j_4-1;k_3,k_4) \subset z + z_{4b}.
            \]
            Hence the theorem holds from the inductive assumption.
 \item[Case 4-2-2-2-2. ] The case that (ii) is satisfied. \\
            In this case  by applying the move 
            $z_{4b}$ and 
            \[
            z_{4c} := -\bm{e}_{j_1 k_1} + \bm{e}_{j_1+1, k_1}
            + \bm{e}_{j_9 k_7} + \bm{e}_{j_9+1, k_7}
            \]
            in this order and then we have 
            $\Vert z + z_{4b} + z_{4c} \Vert_1 
            = \Vert z \Vert_1$ and
            \[
            y(j_1+1,j_2;k_1,k_2) \subset z + z_{4b} +
            z_{4c},\quad 
            y(j_3,j_4-1;k_3,k_4) \subset z + z_{4b} + z_{4c} 
            \]
            Hence the theorem holds from the inductive assumption.
\end{description}

\noindent {\bf Case 5.}\quad 
We now consider the case where $z^1$ contains patterns of signs  in
Figure \ref{fig:Case3a} and does not contain patterns of signs  in
Figure \ref{fig:Case3}.  
We show that if $z_1$ contains the pattern of signs in Figure
\ref{fig:Case5-1}, we can reduce the $L_1$ norm  $z$ or otherwise $z$
is not a move. 
The proof is by induction on 
\[
l := \min \left((j_2 - j_1) + (k_2 - k_1), (j_4 - j_3) + (k_3 - k_4)
 	    \right).
\]
When $l=1$, theorem holds by Case 1.
\begin{description}
 \item[Case 5-1. ] $k_2 > k_4$\\
	    Based on the argument in Case 3-2-1 and 3-2-2, we only need to consider
	    the case that $z^1$ contains patterns in Figure \ref{fig:Case5-1}, where
	    $z_{1jk}=0^*$ and $z_{1jk}=0_*$ denote $x_{1jk} = y_{1jk} > 0$ and 
	    $x_{2jk} = y_{2jk} > 0$, respectively. 
	    Define two set of cells ${\cal A}_1$ and ${\cal A}_2$ as in Figure
	    \ref{fig:Case5-1}. Then there exist $j_5$, $k_5$, $j_6$ and
	    $k_6$ such 
	    that  
	    \[
	    j_1 < j_5 < j_2, \quad j_3 < j_6 < j_4, \quad 
	    k_1 < k_5 < k_2, \quad k_4 < k_6 < k_4, 
	    \]
	    \[
	    z_{1 j_5 k_5} = 0^*, \quad z_{1 j_5 k_5 + 1} = 0_*, \quad 
	    z_{1 j_6 k_6} = 0_*, \quad z_{1 j_6, k_6+1} = 0^*
	    \]
	    as represented in Figure \ref{fig:Case5-1}(i). 
	    Then we can apply the move
	    \[
	    z_{5a} := \bm{e}_{j_5 k_5} - \bm{e}_{j_5, k_5+1} - \bm{e}_{j_6 k_6} -
	    \bm{e}_{j_5, k_6 + 1}, 
	    \]
	    \[
	    z_{5b} := \bm{e}_{j_6 k_6} - \bm{e}_{j_6, k_6+1}
	    - \bm{e}_{j_4 k_4} + \bm{e}_{j_4,k_4+1} 
	    \]
	    to $z$ in this order and $z' := z + z_{5a} + z_{5b}$ is expressed as in Figure
	    \ref{fig:Case5-1}(ii).  
	    Suppose that there exists $(j,k) \in {\cal A}_1$ such that
	    $z_{1jk} < 0$.   
	    Then $z_{1 j k_4} =0$ and hence there exists 
	    $k \le k' < k_4$ such that 
	    \[
	     z_{1 j k'} < 0, \quad  z_{1 j, k'+1} = 0. 
	    \]
	    Therefore we can apply the move 
	    \[
     	    z_{5c} := -\bm{e}_{j_5 k_5} + \bm{e}_{j_5, k_5+1}
     	    + \bm{e}_{j k'} - \bm{e}_{j,k'+1}
	    \]
	    and $z'' := z' + z_{5c}$ satisfies 
	    $$
	    \Vert z'' \Vert_1 = \Vert z \Vert_1, \quad 
	    y(j_1,j_2;k_1,k_2) \subset z''.
	    $$
	    Therefore the theorem holds by the inductive assumption.

	    Similarly we can prove the theorem in the case where there exists
	    $(j,k) \in {\cal A}_2$ such that $z_{1jk} > 0$.
	    
	    Now we suppose that $z_{1jk} \ge 0$ for 
	    $(j,k) \in {\cal A}_1$ and $z_{1jk} \le 0$ for 
	    $(j,k) \in {\cal A}_1$.  
	    Since there does not exist the pattern in 
	    Figure \ref{fig:Case3}, there exist $k_4 < k_7 < k_3$ such that 
	    $z_{1jk} \ge 0$ for $k \le k_7$ and $z_{1jk} \le 0$ for $k >
	    k_7$.
	    This contradicts the condition $\sum_{k=1}^K k z_{1jk} = 0$
	    and hence $z$ is not a move. 
\begin{figure}
 \centering
 \begin{tabular}{cc}
  \includegraphics[scale=0.6]{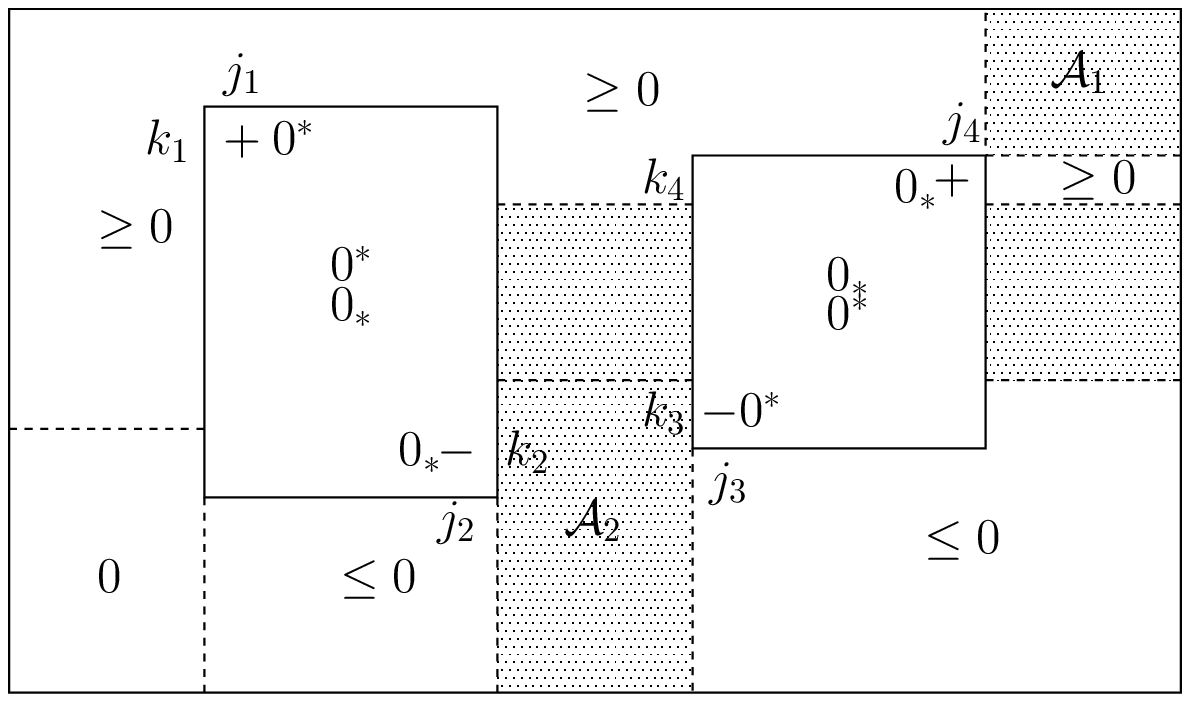} & 
  \includegraphics[scale=0.6]{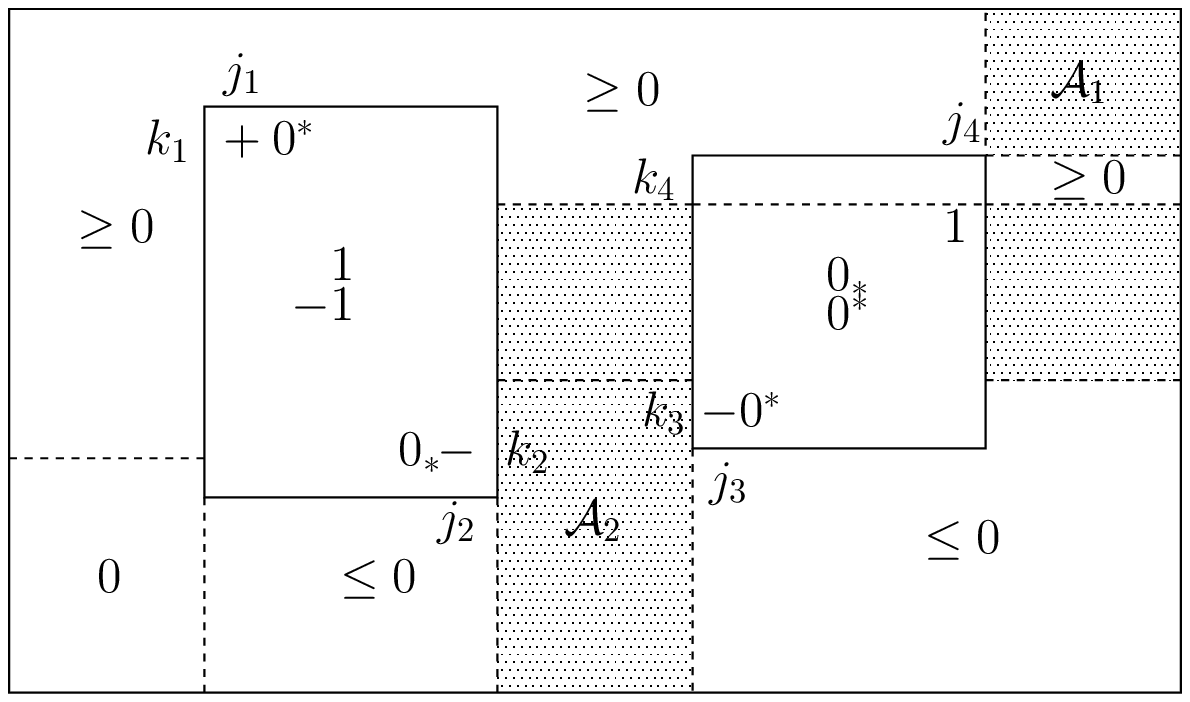}\\
  (i) & (ii)
 \end{tabular}
 \caption{Case 5-1}
 \label{fig:Case5-1}
\end{figure}

 \item[Case 5-2. ] $k_2 < k_4$\\
	    By using the same argument, we only need to consider the
	    case that $z^1$ contains patterns in Figure \ref{fig:Case5-2}(i).
	    Then 
	    both $S^+$ and $S^-$ is monotone in the sense of 
	    conditions 1 and 2 of Lemma \ref{ref:separation}. 
	    Therefore if $z$ is a move, we can reduce $L_1$ norm of $z$ 
	    from Lemma \ref{ref:separation} in the similar way to Case 5-1. 
\begin{figure}
 \centering
 \begin{tabular}{cc}
  \includegraphics[scale=0.6]{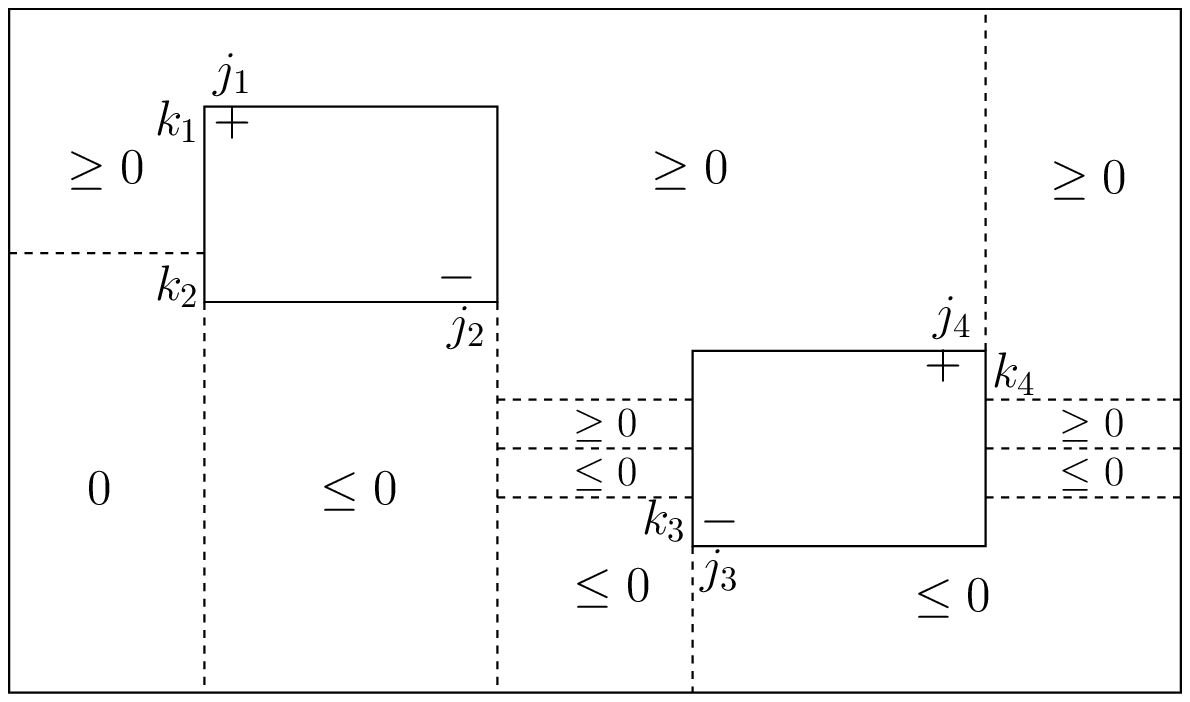} & 
  \includegraphics[scale=0.6]{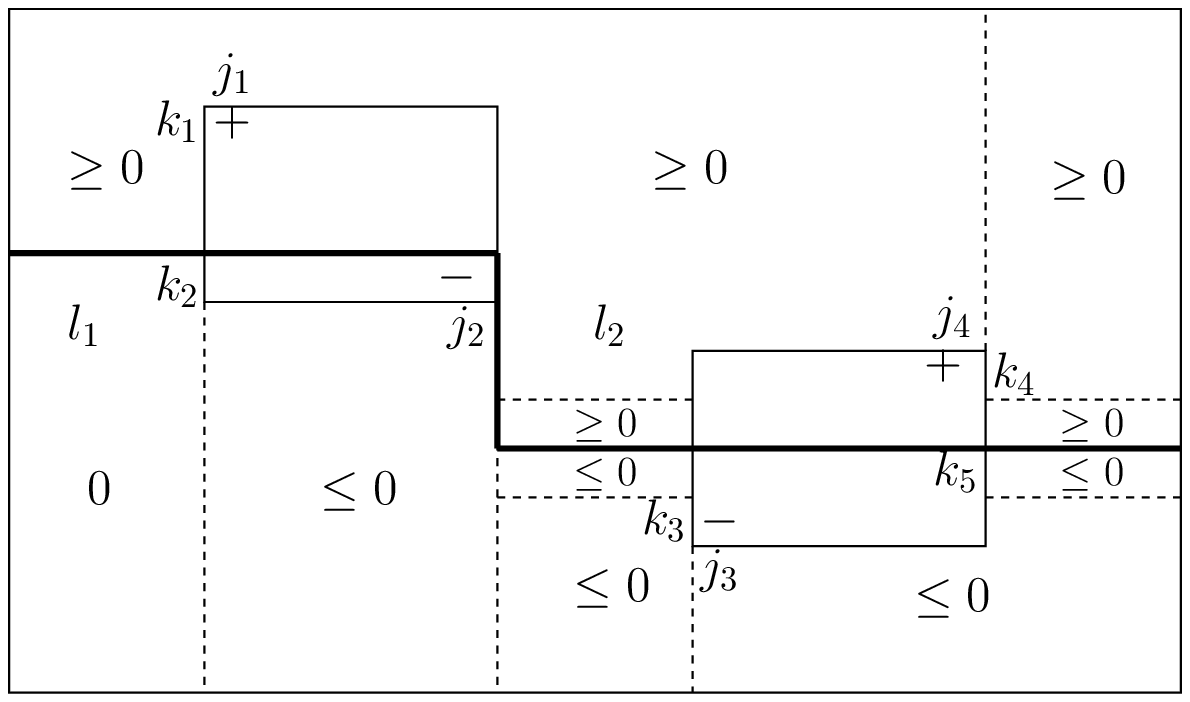}\\
  (i) & (ii)
 \end{tabular}
 \caption{Case 5-2}
 \label{fig:Case5-2}
\end{figure}
	    
\end{description}

\noindent {\bf Acknowledgments.}\ 
We are grateful to Kazuo Murota for valuable comments on the
separation lemma and to Ian Dinwoodie for constructive suggestion 
on the numerical examples.

\bibliographystyle{plainnat}
\bibliography{Hara-Takemura-Yoshida-arxiv}
\end{document}